\documentclass[11pt,reqno]{amsart}

\usepackage{geometry}
\geometry{top = 2.54cm, bottom = 2.54cm, left = 2.54cm, right = 2.54cm}

\usepackage{amsmath}
\usepackage{amssymb}

\usepackage{graphicx}
\usepackage{tikz-cd}
\graphicspath{ {./Figures/} }

\usepackage{import}
\usepackage{xifthen}
\usepackage{pdfpages}
\usepackage{transparent}

\newcommand{%
    \def\svgwidth{\columnwidth}
    \import{./Figures/}{.pdf_tex}
}[2]{%
    \def\svgwidth{#1\columnwidth}
    \import{./Figures/}{#2.pdf_tex}
}

\usepackage{hyperref}
\usepackage{xcolor}
\definecolor{primarycolor}{RGB}{113,148,112}
\definecolor{secondarycolor}{RGB}{166,44,55}
\definecolor{tertiarycolor}{RGB}{193,196,148}
\definecolor{coral}{RGB}{248,131, 121}
\hypersetup{
	colorlinks,
	linkcolor = primarycolor,
	citecolor = secondarycolor,
	urlcolor = primarycolor,
	linktoc = page
}

\newcommand{\GG}[1]{}

\newtheorem{theorem}{Theorem}[section]
\newtheorem{lemma}[theorem]{Lemma}
\newtheorem{proposition}[theorem]{Proposition}
\newtheorem{corollary}[theorem]{Corollary}

\theoremstyle{definition}
	\newtheorem{definition}[theorem]{Definition}

	\newtheorem{notation}[theorem]{Notation}
	\newtheorem*{acknowledgements}{Acknowledgements}


\newcommand{\set}[1]{\left\{ #1 \right\}}
\newcommand{\abs}[1]{\left\lvert #1 \right\rvert}

\newcommand{\Oslash}{\text{\O}}
\newcommand{\N}{\mathbb{N}}
\newcommand{\Z}{\mathbb{Z}}
\newcommand{\R}{\mathbb{R}}
\newcommand{\C}{\mathbb{C}}
\newcommand{\Sp}{\mathbb{S}}
\newcommand{\T}{\mathbb{T}}
\newcommand{\RP}{\mathbb{R}\mathrm{P}}


\DeclareMathOperator{\diam}{diam}
\DeclareMathOperator{\scal}{scal}
\DeclareMathOperator{\sect}{sect}
\DeclareMathOperator{\mscal}{mscal}
\DeclareMathOperator{\UW}{UW}

\DeclareMathOperator{\sys}{sys}



\title[Urysohn width of hypersurfaces and positive macroscopic scalar curvature]{Urysohn width of hypersurfaces and positive macroscopic scalar curvature}

\author[T. Gil Moreno de Mora Sard\`a]{Teo Gil Moreno de Mora Sard\`a}
\address{Teo Gil Moreno de Mora Sard\`a, Univ Paris Est Creteil, CNRS, LAMA, F-94010 Creteil, France;
Univ Gustave Eiffel, LAMA, F-77447 Marne-la-Vall\'ee, France; Departament de Matem\`atiques, Universitat Aut\`onoma de Barcelona, Barcelona, Spain}
\email{teo.gil-moreno-de-mora-i-sarda@u-pec.fr}

\date{\today}

\makeatletter 
\@namedef{subjclassname@2020}{\textup{2020} Mathematics Subject Classification} 
\makeatother
\subjclass[2020]{Primary 53C23; Secondary 53C21}
\keywords{Urysohn width, macroscopic scalar curvature, systole, hypersurface}

\thanks{The author acknowledges support by the project Min-Max (ANR-19-CE40-0014), the FEDER/AEI/MICINN grant PID2021-125625NB-I00 and the AGAUR grant 2021-SGR-01015}

\begin{document}

\begin{abstract}
	We prove that if a complete Riemannian $n$-manifold with non-trivial codimension~1 homology with $\Z_2$-coefficients or $\Z$-coefficients has positive macroscopic scalar curvature large enough, then it contains a non-nullhomologous hypersurface of small Urysohn $(n-2)$-width. This constitutes a macroscopic analogue of a theorem by Bray--Brendle--Neves on the area of non-contractible 2-spheres in a closed Riemannian 3-manifold with positive scalar curvature. Our proof is based on an adaptation of Guth's macroscopic version of the Schoen-Yau descent argument.
\end{abstract}

\maketitle

\section{Introduction}

The scalar curvature of a Riemannian $n$-manifold $M$ is a fundamental invariant in Riemannian geometry. The scalar curvature $\scal(x)$ at a point $x \in M$ is defined as
\begin{equation*}
	\scal(x) = \sum_{i \neq j} \sect_x(e_i,e_j),
\end{equation*}
where $\sect_x$ denotes the sectional curvature of the manifold $M$ at the point $x$ and $(e_i)$ is an orthonormal basis of the tangent space $T_xM$. The scalar curvature can be equivalently defined through the volumetric deviation of geodesic balls of infinitesimal radius with respect to Euclidean balls of the same radius. More precisely, the volume of the geodesic ball $B(x,r)$ centered at a point~$x \in M$ satisfies
\begin{equation*}
	\abs{B(x,r)} = b_n r^n \left( 1 - \frac{\scal(x)}{6(n+2)} r^2 + O(r^3)\right)
\end{equation*}
for radii $r > 0$ small enough, where $\scal(x)$ denotes the scalar curvature of $M$ at the point $x$ and~$b_n$ is the volume of the unit ball in the Euclidean $n$-dimensional space.

A central problem in Riemannian geometry consists in understanding the relation between scalar curvature and the global topology and geometry of a manifold. In \cite{Bray_Brendle_Neves_2010}, the authors investigated the effect of a lower bound on the scalar curvature of a Riemannian 3-manifold on its 2-systole.

\begin{definition} \label{def:homotopicalSystole}
	Let $M$ be a Riemannian $n$-manifold with $\pi_k(M) \neq 0$ for some $k \in \set{1, \dots, n-1}$. The \emph{homotopical $k$-systole} of $M$ is defined as
	\begin{equation*}
		\sys\pi_k(M) := \inf\set{ \abs{\Sigma} \mid \Sigma \subset M \text{ immersed } k\text{-sphere such that } [\Sigma] \neq 0 \in \pi_k(M) },
	\end{equation*}
	where $\abs{\Sigma}$ denotes the $k$-dimensional volume of the $k$-sphere $\Sigma$.
\end{definition}

\begin{theorem}[\cite{Bray_Brendle_Neves_2010}] \label{thm:BrayBrendleNeves}
	Let $M$ be a closed Riemannian 3-manifold with $\pi_2(M) \neq 0$. Suppose that $\scal \geq s > 0$. Then
	\begin{equation} \label{eqn:BBNInequality}
		\sys\pi_2(M) \leq \frac{8\pi}{s}.
	\end{equation}
	Moreover, equality holds if and only if the universal cover of $M$ is isometric to the standard Riemannian cylinder~$\Sp^2(1) \times \R$ up to scaling.
\end{theorem}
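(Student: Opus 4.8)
The plan is to run the Schoen--Yau stability argument on a least-area minimal $2$-sphere representing $\sys\pi_2(M)$, and to obtain the rigidity statement from an analysis of a constant-mean-curvature foliation near this sphere, in the style of the scalar-curvature rigidity arguments of Cai--Galloway and Bray. First I would produce the minimiser: since $M$ is closed and $\pi_2(M)\neq 0$, Sacks--Uhlenbeck theory yields a (possibly branched) minimal immersion $F\colon\Sp^2\to M$ with $[F]\neq 0$ in $\pi_2(M)$ and area $\sys\pi_2(M)$; by Gulliver--Osserman--Royden it has no interior branch points, and by Meeks--Yau its image is, up to reparametrisation, either an embedded minimal $2$-sphere or a double cover of an embedded minimal $\RP^2$. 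Passing to the orientable double cover of $M$ changes neither $\pi_2$, nor the bound $\scal\ge s$, nor $\sys\pi_2$, so I may assume $M$ orientable and carry out the computation for an embedded two-sided minimal $2$-sphere $\Sigma$ (area-minimising in its homotopy class, hence stable); the one-sided case reduces to the same computation on the $2$-sphere double-covering the $\RP^2$.

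For the inequality, I would apply the stability inequality
\[
	\int_\Sigma|\nabla f|^2\,dA\ \ge\ \int_\Sigma\bigl(\mathrm{Ric}_M(\nu,\nu)+|h|^2\bigr)f^2\,dA
\]
(with $\nu$ a unit normal and $h$ the second fundamental form of $\Sigma$) to the test function $f\equiv 1$, and substitute the twice-traced Gauss equation, which for a minimal surface in a $3$-manifold reads $\mathrm{Ric}_M(\nu,\nu)=\tfrac12\scal_M-K_\Sigma-\tfrac12|h|^2$, where $K_\Sigma$ is the Gauss curvature. This gives
\[
	0\ \ge\ \tfrac12\int_\Sigma\scal_M\,dA+\tfrac12\int_\Sigma|h|^2\,dA-\int_\Sigma K_\Sigma\,dA ;
\]
Gauss--Bonnet gives $\int_\Sigma K_\Sigma\,dA=2\pi\chi(\Sp^2)=4\pi$, so with $\scal_M\ge s$ and $|h|^2\ge 0$ one gets $0\ge\tfrac{s}{2}|\Sigma|-4\pi$, i.e.\ $\sys\pi_2(M)=|\Sigma|\le 8\pi/s$, proving \eqref{eqn:BBNInequality}.

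Suppose now $\sys\pi_2(M)=8\pi/s$, so that every inequality above is an equality: $\scal_M\equiv s$ and $h\equiv 0$ along $\Sigma$, hence $2K_\Sigma=\scal_\Sigma\equiv s$, $\mathrm{Ric}_M(\nu,\nu)\equiv 0$, the Jacobi operator of $\Sigma$ equals $\Delta_\Sigma$ (lowest eigenvalue $0$, kernel the constants), and $(\Sigma,g_\Sigma)$ is round of curvature $s/2$. Since $\Delta_\Sigma$ is invertible on mean-zero functions, the implicit function theorem yields a one-parameter family $\{\Sigma_t\}_{|t|<\varepsilon}$ of constant-mean-curvature spheres foliating a neighbourhood of $\Sigma_0=\Sigma$, with positive lapse $\varphi_t$ and constant mean curvature $H(t)$. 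Each $\Sigma_t$ is homotopic to $\Sigma$, so $\mathcal A(t):=|\Sigma_t|\ge\sys\pi_2(M)=|\Sigma|$ has a minimum at $t=0$; from $\mathcal A'(t)=H(t)\int_{\Sigma_t}\varphi_t\,dA$ we get $H(0)=0$. On the other hand, writing the linearised mean-curvature equation $-\Delta_{\Sigma_t}\varphi_t-(|h_t|^2+\mathrm{Ric}_M(\nu_t,\nu_t))\varphi_t\equiv H'(t)$, dividing by $\varphi_t$ and integrating over $\Sigma_t$, then invoking the Gauss equation and Gauss--Bonnet on the sphere $\Sigma_t$ together with $\scal_M\ge s$ and $|\Sigma_t|\ge 8\pi/s$, one finds $\int_{\Sigma_t}(|h_t|^2+\mathrm{Ric}_M(\nu_t,\nu_t))\,dA\ge 0$ and hence $H'(t)\le 0$. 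Since $H(0)=0$, this makes $\mathcal A$ non-decreasing for $t<0$ and non-increasing for $t>0$, so $t=0$ is a local maximum; being also a minimum, $\mathcal A$ is constant, forcing $H\equiv 0$, every $\Sigma_t$ minimal and totally geodesic, $\scal_M\equiv s$ and $\varphi_t$ spatially constant on the foliated region. That region is therefore a Riemannian product, and since the set of points with such a local product neighbourhood is open, closed and non-empty, the universal cover $\widetilde M$ is isometric to $\Sp^2(\sqrt{2/s})\times\R$, i.e.\ $\Sp^2(1)\times\R$ up to scaling; the converse is clear.

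I expect the genuine difficulties to lie not in the short stability computation but in (i) the existence and regularity of the least-area sphere --- excluding branch points, embeddedness, and the one-sided case --- and (ii) the rigidity argument, where one must build the constant-mean-curvature foliation near a merely (non-strictly) stable minimal sphere and run the open--closed propagation to globalise the product splitting.
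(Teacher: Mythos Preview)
The paper does not actually prove Theorem~\ref{thm:BrayBrendleNeves}; it is quoted from \cite{Bray_Brendle_Neves_2010} and the only comment on its proof is the single sentence ``The proof of Theorem~\ref{thm:BrayBrendleNeves} relies on the stability formula for a non-contractible 2-sphere of least area.'' Your proposal is entirely consistent with that description and is, in fact, a faithful outline of the original Bray--Brendle--Neves argument: the inequality via the second variation (stability) inequality with test function $f\equiv 1$, the traced Gauss equation and Gauss--Bonnet, and the rigidity via a CMC foliation and the monotonicity of $H(t)$. The computations you sketch are correct, including the key step $H'(t)\int_{\Sigma_t}\varphi_t^{-1}=-\int_{\Sigma_t}|\nabla\varphi_t|^2\varphi_t^{-2}-\int_{\Sigma_t}(|h_t|^2+\mathrm{Ric}_M(\nu_t,\nu_t))\le 0$, and your identification of the genuine technical work (existence and embeddedness of the minimiser, and the open--closed propagation of the product structure) is accurate.
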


The proof of Theorem \ref{thm:BrayBrendleNeves} relies on the stability formula for a non-contractible 2-sphere of least area. Theorem \ref{thm:BrayBrendleNeves} has been generalised in multiple directions. For example, Bray--Brendle--Eichmair--Neves proved an analogous inequality for embedded projective planes, see \cite{Bray_Brendle_Eichmair_Neves_2010}. In higher dimensions, one cannot expect in general a control of the 2-systole solely from a lower bound on the scalar curvature. For instance, for $n \geq 5$, consider the Riemannian product $\Sp^2(1) \times \Sp^{n-2}(r)$ of the unit round 2-sphere with the round $(n-2)$-sphere of radius $r$, which has 2-systole equal to~$4\pi$ and arbitrarily large scalar curvature when one takes $r \to 0$. However, some generalisations have been derived under further topological assumptions on the manifold $M$. For instance, Zhu proved that inequality \eqref{eqn:BBNInequality} holds up to dimension 7 if the manifold admits a non-zero degree map to $\Sp^2 \times \T^{n-2}$, see \cite{Zhu_2020}. The author also generalised Theorem \ref{thm:BrayBrendleNeves} to the non-compact case under suitable topological assumptions, again up to dimension 7, see \cite{Zhu_2023}. In another direction, Richard obtained an estimate for the homotopical 2-systole of $\Sp^2 \times \Sp^2$ endowed with a metric of positive scalar curvature satisfying a certain stretching condition, see \cite{Richard_2020}.

\medskip

Theorem \ref{thm:BrayBrendleNeves} has also motivated analogous results for hypersurfaces which are minimising within their homology class.

\begin{definition}
	Let $M$ be a Riemannian $n$-manifold with $H_k(M;\Z) \neq 0$ for some $k \in \set{1, \dots, n-1}$. The \emph{homological $k$-systole} of $M$ is defined as
	\begin{equation*}
		\sys H_k (M) := \inf \set{\abs{\Sigma} \mid \Sigma \subset M \text{ immersed $k$-submanifold such that } [\Sigma] \neq 0 \in H_k(M;\Z)},
	\end{equation*}
	where $\abs{\Sigma}$ denotes the $k$-dimensional volume of the submanifold $\Sigma$ in $M$.
\end{definition}

In \cite{Stern_2022}, Stern gave a direct proof of the homological analogue of Theorem \ref{thm:BrayBrendleNeves}. A generalisation to dimensions from 4 to 7 was addressed by Chu--Lee--Zhu in \cite{Chu_Lee_Zhu_2024}, where they proved an upper bound on the codimension 1 systole under a stronger curvature positivity condition, namely positive bi-Ricci curvature, and obtained a rigidity statement for the equality case.

\medskip

In \cite{Guth_2010_Metaphors}, Guth introduced a macroscopic analogue of scalar curvature, which quantifies the volumetric deviation of geodesic balls of a fixed radius. Denote by $V^n_s(R)$ the volume of any ball of radius $R$ in the simply connected $n$-dimensional space form of constant scalar curvarture $s$.

\begin{definition}
	The \emph{macroscopic scalar curvature} $\mscal(x,R)$ of a Riemannian $n$-manifold $M$ at a point $x \in M$ and scale $R > 0$ is the unique $s \in \R$ such that
	\begin{equation*}
		\abs{B_{\tilde{M}}(\tilde{x},R)} = V^n_s(R),
	\end{equation*}
	where $\tilde{x}$ is a lift of $x$ to the universal Riemannian cover $\tilde{M}$ of $M$. Equivalently, the macroscopic scalar curvature at a point $x \in M$ satisfies $\mscal(x,R) \geq s$ if and only if
	\begin{equation*}
		\abs{B_{\tilde{M}}(\tilde{x},R)} \leq V^n_s(R).
	\end{equation*}
\end{definition}

The macroscopic scalar curvature is defined through the volumes of balls in the universal cover~$\tilde{M}$ of $M$ in order to ensure that flat manifolds have macroscopic scalar curvature equal to zero at any scale.

One may wonder whether there is a macroscopic analogue of Theorem \ref{thm:BrayBrendleNeves}. The following proposition shows that one cannot hope for a control of the homotopical and homological systoles of a closed Riemannian manifold solely from a lower bound on its macroscopic scalar curvature, see~Section \ref{sec:prolateMetrics}.

\begin{proposition} \label{pro:controlSystoles}
	Let $n \geq 3$ and $k \in \set{2, \dots, n-1}$. For every $s > 0$, there is a family of product Riemannian metrics $(g_\varepsilon)_{\varepsilon \in (0,1)}$ on $\Sp^k \times \Sp^{n-k}$ such that the following holds.
	\begin{enumerate}
		\item For any point $x \in \Sp^k \times \Sp^{n-k}$ and any scale $R > 0$, one has $\mscal_{(\Sp^k \times \Sp^{n-k}, g_\varepsilon)}(x,R) \geq s$, for every $\varepsilon \in (0,1)$.
		\item The homotopical $k$-systole and the homological $k$-systole verify
		\begin{equation*}
			\lim_{\varepsilon \to 0} \sys \pi_k (\Sp^k \times \Sp^{n-k}, g_\varepsilon) = \lim_{\varepsilon \to 0} \sys H_k (\Sp^k \times \Sp^{n-k}, g_\varepsilon) = +\infty.
		\end{equation*}
	\end{enumerate}
\end{proposition}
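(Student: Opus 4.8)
The plan is to take $g_\varepsilon$ to be a Riemannian product of round spheres,
\[
g_\varepsilon \;=\; a_\varepsilon^{2}\, g_{\Sp^k} \;\oplus\; b_\varepsilon^{2}\, g_{\Sp^{n-k}},
\]
where $g_{\Sp^j}$ is the round metric of radius $1$, with $a_\varepsilon \to +\infty$ and $b_\varepsilon \to 0^{+}$ as $\varepsilon \to 0$, and with $b_\varepsilon$ decaying fast enough in terms of $a_\varepsilon$ (the precise rate will fall out of the volume estimate below). These metrics are strongly ``prolate'': the $\Sp^k$-direction is huge while the complementary $\Sp^{n-k}$-direction collapses, so that at every scale the geodesic balls of $g_\varepsilon$ are quantitatively thinner than the Euclidean ball of the same radius, hence thinner than the ball of that radius in the model space form of constant scalar curvature $s$. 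When $n-k=1$ the $\Sp^1$-factor is flat and no product of round spheres can work; one then replaces $a_\varepsilon^{2}g_{\Sp^k}$ by a dumbbell metric $h_\varepsilon$ on $\Sp^{n-1}$ (which needs $n-1\ge 3$) of scalar curvature $\ge s$ and volume tending to $+\infty$, built from long thin necks $\Sp^{n-2}(r)\times[0,L]$ with $r$ of order $s^{-1/2}$; the estimates below then go through with $h_\varepsilon$ in place of $a_\varepsilon^{2}g_{\Sp^k}$.

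First I would verify the systoles. As $\Sp^k\times\Sp^{n-k}$ is simply connected (for $n-k\ge 2$), its universal Riemannian cover is itself. The class $[\Sp^k\times\{\mathrm{pt}\}]$ spans a $\Z$-summand of both $H_k$ and $\pi_k$, and any immersed $k$-sphere or integral $k$-cycle $\Sigma$ with nontrivial component along it projects, under the $1$-Lipschitz map $\Sp^k\times\Sp^{n-k}\to\Sp^k(a_\varepsilon)$, to a map, resp.\ cycle, of nonzero degree $d$; since a degree-$d$ map $\Sp^k\to\Sp^k(a_\varepsilon)$ has $k$-volume at least $\abs{d}\vol(\Sp^k(a_\varepsilon))$ and projections do not increase $k$-volume, one gets $\abs{\Sigma}\ge\vol(\Sp^k(a_\varepsilon))=\sigma_k a_\varepsilon^{k}$, writing $\sigma_j=\vol(\Sp^j)$. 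The remaining nonzero classes are disposed of the same way after projecting to the factor that detects them — for the handful of $(n,k)$ for which $\pi_k(\Sp^{n-k})\ne 0$ (Hopf-type classes) a small supplementary argument shows that such spheres, being immersions into the $n$-manifold, cannot collapse entirely into the tiny factor and still have $k$-volume $\gtrsim \sigma_k a_\varepsilon^{k}$. Letting $a_\varepsilon\to\infty$ gives $\sys\pi_k(g_\varepsilon),\sys H_k(g_\varepsilon)\to+\infty$.

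The core of the argument is the estimate $\mscal_{g_\varepsilon}(x,R)\ge s$ for all $x$ and all $R>0$, i.e.\ $\abs{B_{g_\varepsilon}(x,R)}\le V^n_s(R)$ for all $R$. Set $\rho_s=\sqrt{n(n-1)/s}$, so $V^n_s(R)$ is the increasing function of $R$ given by the volume of a geodesic ball in $\Sp^n(\rho_s)$, equal to $\sigma_n\rho_s^{n}$ once $R\ge\pi\rho_s$. Working in geodesic polar coordinates in each factor and using $\sin t\le t$ together with $\abs{B_{\Sp^{n-k}(b_\varepsilon)}(\,\cdot\,)}\le\vol(\Sp^{n-k}(b_\varepsilon))$, one gets the crude bound
\[
\abs{B_{g_\varepsilon}(x,R)}\ \le\ \sigma_{n-k}\,b_\varepsilon^{n-k}\,\abs{B_{\Sp^k(a_\varepsilon)}(R)}\ \le\ C_1\, b_\varepsilon^{n-k}\,R^{k},\qquad C_1=C_1(n,k).
\]
One then argues in three scale regimes. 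For $R\ge\pi\rho_s$ one has $\abs{B_{g_\varepsilon}(x,R)}\le\vol(\Sp^k\times\Sp^{n-k},g_\varepsilon)=\sigma_k\sigma_{n-k}a_\varepsilon^{k}b_\varepsilon^{n-k}$, which is $\le\sigma_n\rho_s^{n}=V^n_s(R)$ provided $b_\varepsilon^{n-k}\le c_2\,a_\varepsilon^{-k}$ with $c_2=c_2(n,k,s)$ — the first constraint on $b_\varepsilon$. For $\lambda b_\varepsilon\le R\le\pi\rho_s$, with $\lambda=(2C_1/b_n)^{1/(n-k)}$, one splits at a threshold $R_0=R_0(s)<\pi\rho_s$: on $[R_0,\pi\rho_s]$ the positive function $V^n_s$ has a positive minimum while $C_1b_\varepsilon^{n-k}R^{k}\le C_1b_\varepsilon^{n-k}(\pi\rho_s)^{k}\to 0$, and on $[\lambda b_\varepsilon,R_0]$ one has $V^n_s(R)\ge\tfrac12 b_n R^{n}$, so the crude bound already gives $C_1b_\varepsilon^{n-k}R^{k}\le V^n_s(R)$ (and $\lambda b_\varepsilon<R_0$ for $\varepsilon$ small). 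For $R\le\lambda b_\varepsilon$ the crude bound exceeds $b_n R^{n}$ by a dimensional constant and one must use the positive curvature of the collapsing factor: sharpening $\sin t\le t$ to $\sin t\le t(1-c\,t^{2})$ on $(0,\pi]$, where $c=\inf_{(0,\pi]}(t-\sin t)/t^{3}>0$, and using $n-k\ge 2$ to bound $(1-ct^{2})^{\,n-k-1}\le 1-ct^{2}$, the polar-coordinate computation improves to
\[
\abs{B_{g_\varepsilon}(x,R)}\ \le\ b_n R^{n}\Bigl(1-C_2\,\tfrac{R^{2}}{b_\varepsilon^{2}}\Bigr)\ \le\ b_n R^{n}\bigl(1-c_3 s R^{2}\bigr)\ \le\ V^n_s(R),
\]
where $C_2=C_2(n,k)>0$, the middle inequality holds once $b_\varepsilon^{2}\le C_2/(c_3 s)$ — the second constraint — and the last one is the Taylor lower bound for $V^n_s$, valid since $R\le\lambda b_\varepsilon$ lies below the relevant threshold for $\varepsilon$ small. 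Finally one picks, e.g., $a_\varepsilon=1/\varepsilon$ and $b_\varepsilon$ a fixed power of $\varepsilon$ satisfying both constraints.

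The main obstacle is the smallest-scale regime $R\lesssim b_\varepsilon$. There $V^n_s(R)$ is genuinely smaller than the Euclidean value $b_n R^{n}$, while the naive product-of-balls estimate exceeds $b_n R^{n}$, so one has to turn the merely infinitesimal fact that the collapse forces $\scal_{g_\varepsilon}\to+\infty$ — which is precisely where the hypothesis $n-k\ge 2$ enters — into the uniform quantitative gain above, by retaining the cubic correction of $\sin$. Everything else (the systole lower bounds and the intermediate- and large-scale comparisons) is comparatively soft.
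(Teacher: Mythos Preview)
Your construction is genuinely different from the paper's, and it leaves a gap at $(n,k)=(3,2)$ that the paper's construction handles.

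You place all the collapsing in the $\Sp^{n-k}$-factor while inflating $\Sp^k$ as a round sphere; the collapse contributes large curvature only when $n-k\ge 2$. Your dumbbell rescue for $k=n-1$ requires the neck $\Sp^{n-2}(r)\times[0,L]$ to have positive scalar curvature, hence $n-2\ge 2$, which excludes $\Sp^2\times\Sp^1$: any neck on $\Sp^2$ is flat, and Gauss--Bonnet forbids a metric on $\Sp^2$ with $\scal\ge s>0$ and area $\to\infty$. (Even for $n\ge 4$ your claim that ``the estimates below then go through'' for the dumbbell is optimistic: the small-scale regime in your analysis drew its curvature gain from the $\Sp^{n-k}(b_\varepsilon)$-factor, which is now $\Sp^1$ and flat, so the cubic correction of $\sin$ no longer helps; you would need to redo that regime using the curvature of the dumbbell neck instead.)

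The paper sidesteps all of this by putting the thinness in the $\Sp^k$-factor rather than in $\Sp^{n-k}$. It takes $g_\varepsilon$ to be the product of a prolate ellipsoid
\[
E^k(\varepsilon,a(\varepsilon))=\Bigl\{\tfrac{x_1^2}{\varepsilon^2}+\cdots+\tfrac{x_k^2}{\varepsilon^2}+\tfrac{x_{k+1}^2}{a^2}=1\Bigr\}\subset\R^{k+1},
\]
with $a(\varepsilon)$ chosen so that $\abs{E^k(\varepsilon,a(\varepsilon))}=w_k$, and the \emph{fixed} round $\Sp^{n-k}(1)$. Since $k\ge 2$ always, the ellipsoid has $k-1\ge 1$ thin directions, and the single crude bound $\abs{B_{E^k}(\cdot,R)}\le 2w_{k-1}R\,\varepsilon^{k-1}$ forces ball volumes in the universal cover to $0$ for every fixed $R$, while the systoles remain constantly equal to $w_k$; a rescaling then yields the statement for any prescribed $s$. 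No case distinction on $n-k$ is needed, and there is no three-regime analysis.

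What your approach buys is a more explicit verification of the condition ``$\mscal\ge s$ at \emph{every} scale $R$'': your three-regime argument confronts this head-on, whereas the paper's ``after rescaling'' reduction is stated rather tersely. Your systole lower bound via the $1$-Lipschitz projection to the large factor is also the right idea, though the aside about Hopf-type classes in $\pi_k(\Sp^{n-k})$ is left vague.
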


However, one could hope to have an analogue of Theorem \ref{thm:BrayBrendleNeves} holding for a weaker metric invariant describing the size of topologically non-trivial hypersurfaces, as for instance their codimension 1 Urysohn width.

\begin{definition}
	Let $X$ be a metric space and $k \in \N$. The $k$-dimensional \emph{Urysohn width} $\UW_k (X)$ of $X$ is defined as the infimal positive real number $w > 0$ such that there exists a continuous map~$f: X \rightarrow Y$ into a $k$-dimensional simplicial complex $Y$ whose fibres satisfy
	\begin{equation*}
		\diam_X{(f^{-1}(y))} \leq w
	\end{equation*}
	for every $y \in Y$.
\end{definition}

Intuitively, the Urysohn $k$-width measures how close is the metric space $X$ from being $k$-dimensional. Guth \cite{Guth_2017} proved the following result, which was conjectured by Gromov in~\cite{Gromov_1986}.

\begin{theorem}[\cite{Guth_2017}] \label{thm:Guth2017}
	There exists a dimensional constant $c_n > 0$ such that the following holds. Let $M$ be a complete Riemannian $n$-manifold. Suppose that there is a radius $R > 0$ such that, for every $x \in M$, the closed geodesic ball $B(x,R)$ centered at $x$ has volume $\abs{B(x,R)} \leq c_n R^n$. Then
	\begin{equation*}
		\UW_{n-1}(M) \leq R.
	\end{equation*}
\end{theorem}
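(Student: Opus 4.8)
The plan is to prove the statement by induction on the dimension $n$, in the following sharper local form: there is a constant $c_n > 0$ such that whenever every closed ball of radius $R$ in a complete Riemannian $n$-manifold $M$ has volume at most $c_n R^n$, there is a continuous map from $M$ to an $(n-1)$-dimensional simplicial complex all of whose fibres have diameter at most $R$ -- which is exactly the assertion $\UW_{n-1}(M) \le R$. The base case $n = 1$ is elementary: a complete $1$-manifold is a line or a circle, and for $c_1$ small the volume hypothesis forces the circle to be so short that it maps to a point with fibres of diameter at most $R$, while $\R$ simply fails the hypothesis.

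For the inductive step I would first produce a large supply of ``thin'' hypersurfaces using the coarea formula. Fix a maximal $R$-separated net $\set{p_i} \subset M$ and put $\rho_i(x) = d(p_i, x)$. On $B(p_i, R)$ the coarea inequality gives $\int_{R/2}^{R} \vol_{n-1}(\rho_i^{-1}(t))\, dt \le \vol(B(p_i, R)) \le c_n R^n$, so some regular value $t_i \in [R/2, R]$ yields a $C^1$ hypersurface $\Sigma_i := \rho_i^{-1}(t_i)$ with $\vol_{n-1}(\Sigma_i) \le 2 c_n R^{n-1}$. Since every intrinsic metric ball inside $\Sigma_i$ then has $(n-1)$-volume at most $2 c_n R^{n-1}$, choosing $c_n$ small relative to $c_{n-1}$ makes the induction hypothesis applicable to $\Sigma_i$ with the induced metric at a fixed smaller scale, say $R/10$, and produces continuous maps $\phi_i \colon \Sigma_i \to Y_i$ to $(n-2)$-complexes with fibres of diameter at most $R/10$.

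It remains to assemble these into a single map $M \to Y$ with $Y$ an $(n-1)$-complex and fibres of diameter $\le R$. The mechanism is a ``book of pages'' (mapping cylinder) construction: one uses the $\Sigma_i$ to cut $M$ into regions, sends each region to a vertex, glues an $(n-1)$-dimensional block $[0,1] \times Y_i$ along an embedded collar of each $\Sigma_i$, and on that collar defines the map by interpolating the two regional maps with $\phi_i$; a fibre over the interior of a block then lies in a thin collar of some $\Sigma_i$ intersected with a fibre of $\phi_i$, hence has diameter $\le R/10 + O(\text{collar width}) \le R$ once the collars and $c_n$ are taken small. Making the regions genuinely manageable -- so that the output is an $(n-1)$-complex rather than something of higher dimension, and fibres over vertices are also small -- is the delicate step: one must process the net points so that each new slice subdivides the piece containing it, which amounts to a second recursion (on a volume/complexity parameter) nested inside the induction on $n$.

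The main obstacle is precisely this coupling. The exponent $n-1$ is sharp -- a straightforward covering/nerve argument only yields $\UW_{C_n}(M) \le R$ for a larger dimensional constant $C_n$ -- so one is forced to make the dimension drop by exactly one at each stage. This requires (i) upgrading the merely \emph{total} $(n-1)$-volume bound on a slice, which is all coarea provides, into the ball-volume hypothesis needed for the recursion, and it is this that forces the rapidly decreasing sequence $c_n \ll c_{n-1} \ll \cdots$; (ii) invoking Sard's theorem and arranging continuity (or a finite PL model) for the family of slice-maps so that the gluing yields a genuine continuous map; and (iii) controlling how the chosen hypersurfaces partition $M$. Finally, non-compactness of $M$ is handled by exhausting $M$ by compact submanifolds-with-boundary and patching, or by replacing the finite net with a proper distance-type function; since the whole construction is local, this does not affect the width bound.
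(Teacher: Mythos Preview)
The paper does not contain a proof of this theorem: it is quoted from \cite{Guth_2017} and invoked as a black box in Section~\ref{sec:main_thm}. There is therefore no in-paper proof to compare your proposal against.

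That said, your outline is broadly in the spirit of Guth's original argument---induction on dimension, with the coarea formula producing thin distance spheres to which the inductive hypothesis applies---and you correctly identify the genuinely hard step, namely assembling the local slice-maps into a single continuous map to an $(n-1)$-complex with small fibres. What you have written, however, does not carry that step out: the ``book of pages'' description and the ``second recursion on a volume/complexity parameter'' are names for the difficulty rather than a resolution of it. In particular, a maximal $R$-separated net together with the spheres $\Sigma_i$ around its points does not by itself decompose $M$ into regions of small diameter, so the fibres over your ``region'' vertices are uncontrolled without substantial further work; this is exactly where the bulk of \cite{Guth_2017} is spent. If you want a more tractable route, the paper itself notes that Papasoglu \cite{Papasoglu_2020} later gave a much shorter proof, replacing most of the assembly machinery by a minimax argument with separating sets.
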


Theorem \ref{thm:Guth2017} was proven in the more general setting of metric spaces and for the Hausdorff content in \cite{LLNR_2022}. Also recently, a shorter and simpler proof of Theorem \ref{thm:Guth2017} was given by Papasoglu in \cite{Papasoglu_2020}. As a corollary of Theorem \ref{thm:Guth2017}, the Urysohn $(n-1)$-width of a closed Riemannian~$n$-manifold $M$ can be estimated in terms of its volume.

\begin{corollary}[\cite{Guth_2017}]
	Let $M$ be a closed Riemannian $n$-manifold. Then
	\begin{equation*}
		\UW_{n-1}(M) \leq c_n^{-1/n} \abs{M}^{1/n}.
	\end{equation*}
\end{corollary}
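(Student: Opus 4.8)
The plan is to derive the corollary directly from Theorem \ref{thm:Guth2017} by choosing the radius $R$ appropriately. Since $M$ is closed, it is in particular complete and of finite volume $\abs{M} < \infty$, so the trivial volume bound $\abs{B(x,R)} \leq \abs{M}$ holds for every $x \in M$ and every $R > 0$. The idea is to pick $R$ just large enough that this trivial bound already implies the hypothesis $\abs{B(x,R)} \leq c_n R^n$ of Theorem \ref{thm:Guth2017}.

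Concretely, I would set $R := c_n^{-1/n} \abs{M}^{1/n}$, so that $c_n R^n = \abs{M}$. Then for every $x \in M$ one has
\begin{equation*}
	\abs{B(x,R)} \leq \abs{M} = c_n R^n,
\end{equation*}
which is precisely the volume hypothesis required in Theorem \ref{thm:Guth2017}. Applying that theorem yields $\UW_{n-1}(M) \leq R = c_n^{-1/n} \abs{M}^{1/n}$, which is the desired inequality.

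One minor subtlety is that Theorem \ref{thm:Guth2017} gives the non-strict bound $\UW_{n-1}(M) \leq R$ under the non-strict volume bound $\abs{B(x,R)} \leq c_n R^n$, so no $\varepsilon$-room or limiting argument is needed; the choice of $R$ above makes the volume inequality hold on the nose. (If one only had the theorem under a strict inequality, one would instead take $R = (1+\delta) c_n^{-1/n}\abs{M}^{1/n}$ for arbitrary $\delta > 0$ and let $\delta \to 0$, using that $\UW_{n-1}$ is a single real number independent of $\delta$.) There is essentially no obstacle here: the only thing to observe is that a closed manifold has finite volume and that the geodesic ball of any radius is contained in $M$, so its volume is at most $\abs{M}$; the rest is the bookkeeping of solving $c_n R^n = \abs{M}$ for $R$.
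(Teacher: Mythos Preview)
Your proposal is correct and is precisely the intended argument: the paper states this result as an immediate corollary of Theorem \ref{thm:Guth2017} without spelling out a proof, and the derivation you give (choose $R = c_n^{-1/n}\abs{M}^{1/n}$ so that the trivial bound $\abs{B(x,R)} \leq \abs{M} = c_n R^n$ triggers the theorem) is exactly what is implicit.
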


As a consequence, the infimum of the Urysohn $(n-2)$-width among all non nullhomologous hypersurfaces immersed in $M$ is a weaker invariant than its homological $(n-1)$-systole.

\medskip

The main result of this paper is the following macroscopic version of Theorem \ref{thm:BrayBrendleNeves}. Let $G = \Z_2$ or $\Z$. Consider a non-simply connected complete Riemannian $n$-manifold $M$ such that $H_{n-1}(M;G) \neq 0$. Notice that when the manifold $M$ is compact and $G$-orientable, having non-trivial codimension~1 $G$-homology already implies that $M$ is not simply connected, by Poincaré's Duality and the Universal Coefficient Theorem. However, it is no longer true when one considers non-compact manifolds. Consider the homotopical 1-systole $\sys\pi_1(M)$, that is, the length of the shortest non-contractible closed curve on $M$ (see Definition~\ref{def:homotopicalSystole}). Notice that if $M$ is non-compact, one may have $\sys\pi_1(M) = 0$.

\begin{theorem} \label{thm:main}
	There is a dimensional constant $\kappa_n > 0$ such that the following holds. Let~\mbox{$G = \Z_2$} or~$\Z$. Let $M$ be a non-simply connected complete Riemannian $n$-manifold such that~$H_{n-1}(M;G) \neq 0$ and $\sys\pi_1(M) > 0$. Fix $R > 0$ and $s > 0$ such that~$\kappa_n/\sqrt{s} < R < \frac{1}{2} \sys\pi_1(M)$. Suppose that~${\mscal (x,R) \geq s}$ for every point $x \in M$. Then there exists a closed embedded hypersurface $\Sigma$ such that $[\Sigma] \neq 0 \in H_{n-1}(M;G)$ and
	\begin{equation*}
		\UW_{n-2}(\Sigma) \leq \frac{n-1}{n}R.
	\end{equation*}
\end{theorem}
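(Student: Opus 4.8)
The plan is to adapt Guth's macroscopic version of the Schoen–Yau inductive descent. Start by using the hypothesis $H_{n-1}(M;G)\neq 0$ to produce, via geometric measure theory, a codimension-$1$ cycle $\Sigma$ of least mass in a non-trivial class; by regularity theory $\Sigma$ is a smooth embedded hypersurface away from a singular set of codimension $\geq 7$, which for the width estimate we may treat as harmless (or work with $\Sigma$ a smooth leaf in the $G=\Z_2$ case, cutting along it to expose a codimension-$1$ piece). The key point is that the area-minimising property of $\Sigma$ gives, via a calibration-type comparison, a bound on the volumes of geodesic balls \emph{inside} $\Sigma$: if $B_\Sigma(x,r)$ denotes an intrinsic ball in $\Sigma$, then $\operatorname{vol}_{n-1}(B_\Sigma(x,r))$ is controlled by the mass that a competitor would sweep out in $M$, which in turn is controlled by volumes of $(n-1)$-balls in $M$, hence by the macroscopic scalar curvature assumption $\mscal(x,R)\geq s$.

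More precisely, the heart of the argument is the following chain of estimates at scale $R$: because $R<\tfrac12\sys\pi_1(M)$, every ball $B(x,R)$ in $M$ lifts isometrically to the universal cover $\tilde M$, so $\mscal(x,R)\geq s$ literally says $|B_M(x,R)|\leq V^n_s(R)$. For $x\in\Sigma$ and $0<r\leq R$, one compares $\Sigma$ inside $B_M(x,r)$ with the cone (or a suitable deformation to the boundary sphere) over $\Sigma\cap\partial B_M(x,r)$; minimality of $\Sigma$ forces
\begin{equation*}
    \operatorname{vol}_{n-1}\big(\Sigma\cap B_M(x,r)\big) \leq r\cdot \operatorname{vol}_{n-2}\big(\Sigma\cap\partial B_M(x,r)\big),
\end{equation*}
and integrating this differential inequality together with the ambient ball-volume bound yields $|B_\Sigma(x,r)|\leq C r\cdot\big(|B_M(x,r)|/r\big)$-type control, ultimately giving a bound of the form $|B_\Sigma(x,\rho)|\leq c_{n-1}\,\rho^{\,n-1}$ for all $\rho$ up to a definite fraction of $R$, once $R>\kappa_n/\sqrt s$ is large enough that $V^n_s(R)$ is genuinely subeuclidean. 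Here the dimensional constant $\kappa_n$ is chosen exactly so that this conversion goes through with the Guth constant $c_{n-1}$.

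With $|B_\Sigma(x,\rho)|\leq c_{n-1}\rho^{n-1}$ in hand for all $x\in\Sigma$ at the scale $\rho=\tfrac{n-1}{n}R$ (the factor $\tfrac{n-1}{n}$ absorbing the loss in the descent), Theorem~\ref{thm:Guth2017} applied to the $(n-1)$-manifold $\Sigma$ gives immediately $\UW_{n-2}(\Sigma)\leq \tfrac{n-1}{n}R$, which is the desired conclusion. One must check that $[\Sigma]\neq 0$ in $H_{n-1}(M;G)$, which is automatic from the construction of $\Sigma$ as a mass-minimiser in a non-trivial class; for $G=\Z$ one uses integral currents and for $G=\Z_2$ mod-$2$ flat chains, and in both cases the minimiser is non-empty and embedded (up to the negligible singular set) precisely because the class is non-trivial.

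The step I expect to be the main obstacle is establishing the intrinsic ball-volume bound in $\Sigma$ from the ambient macroscopic scalar curvature bound: one has to pass from \emph{extrinsic} balls $\Sigma\cap B_M(x,r)$, where minimality gives clean monotonicity and comparison estimates, to \emph{intrinsic} balls $B_\Sigma(x,r)$, which requires controlling how the intrinsic distance on $\Sigma$ compares to the restriction of the ambient distance — a priori they can differ wildly for a minimal hypersurface. This is exactly the subtlety that Guth's macroscopic Schoen–Yau argument is designed to handle, and I would follow his treatment: rather than comparing balls directly, run the width bound on $\Sigma$ using a covering of $\Sigma$ by extrinsic balls and a filling/retraction argument internal to each such ball, so that the final map $\Sigma\to Y$ into an $(n-2)$-complex is built piece by piece, never needing intrinsic metric control beyond what the extrinsic volume bounds provide. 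Keeping careful track of the constants through this covering argument is what fixes the precise value of $\kappa_n$ and the coefficient $\tfrac{n-1}{n}$.
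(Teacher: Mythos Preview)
Your overall architecture is correct---take an (almost) minimising representative $\Sigma$ of a non-trivial class, bound the volumes of balls in $\Sigma$, and feed this into Guth's width theorem---and you have correctly located where the constant $\tfrac{n-1}{n}$ enters. But the middle step, which is the heart of the argument, is not right as written. The cone comparison you display, $\operatorname{vol}_{n-1}(\Sigma\cap B_M(x,r))\leq r\cdot\operatorname{vol}_{n-2}(\Sigma\cap\partial B_M(x,r))$, is the standard monotonicity inequality for minimal hypersurfaces; after rewriting the right-hand side via coarea as $r\cdot\tfrac{d}{dr}\operatorname{vol}_{n-1}(\Sigma\cap B_M(x,r))$ it yields that $r^{-(n-1)}\operatorname{vol}_{n-1}(\Sigma\cap B_M(x,r))$ is non-decreasing, i.e.\ a \emph{lower} density bound. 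It does not produce the \emph{upper} bound you need, and there is no way to ``integrate it together with the ambient ball-volume bound'' to reverse the direction.

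The mechanism the paper actually uses (Guth's Stability Lemma, extended by Alpert to $\Z$-coefficients) is different: one chooses by coarea a radius $t\in(r,R)$ with $|S(x,t)|\leq \tfrac{1}{R-r}|B(x,R)|$, and then replaces $\Sigma\cap B(x,t)$ not by a cone but by a chain lying \emph{in the sphere} $S(x,t)$. For this replacement to be legitimate one needs $[\Sigma\cap B(x,t)]=0$ in $H_{n-1}(B(x,t),S(x,t);G)$, and this is exactly where the hypothesis $R<\tfrac12\sys\pi_1(M)$ does its real work: via Gromov's curve-factoring lemma, every closed curve in $B(x,t)$ has trivial intersection number with $\Sigma$, forcing the relative class to vanish. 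Almost-minimality then gives $|B_\Sigma(x,r)|\leq|\Sigma\cap B(x,t)|\leq|S(x,t)|+\delta\leq\tfrac{1}{R-r}V^n_s(R)+\delta$, and optimising $r=\tfrac{n-1}{n}R$ reduces the width bound to $V^n_{sR^2}(1)<\tfrac{(n-1)^{n-1}}{n^n}c_{n-1}$, which defines $\kappa_n$. Two further remarks: your worry about intrinsic versus extrinsic balls is misplaced---the containment $B_\Sigma(x,r)\subset\Sigma\cap B_M(x,r)$ is trivial since intrinsic distance dominates extrinsic---so once the extrinsic bound is in hand nothing more is needed; and invoking GMT minimisers is an unnecessary complication (singularities in high dimensions, existence on non-compact $M$), whereas the paper simply takes a smooth embedded $\delta$-almost minimiser in the class and lets $\delta\to 0$ at the end.
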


The proof of Theorem \ref{thm:main} consists of two steps. First, we use the technique introduced by Guth~\cite{Guth_2010_Systolic} and extended by Alpert \cite{Alpert_2022} to estimate the volume of metric balls of an almost minimising hypersurface $\Sigma$. Second, we apply Theorem \ref{thm:Guth2017} to deduce an upper bound for the Urysohn $(n-2)$-width of $\Sigma$.

\medskip

One cannot expect Theorem \ref{thm:main} to hold for arbitrarily large values of the scale $R$, as the volume growth of the metric balls centered at a fixed point is significantly affected beyond the injectivity radius at that point, as shown by the following proposition, see Section \ref{sec:BergerMetrics}.

\begin{proposition} \label{pro:RP3}
	Fix $\kappa > 0$. There is a family of Riemannian metrics $(\bar{g}_s )_{s > 0}$ on the real projective space~$\RP^3$ satisfying the following properties.
	\begin{enumerate}
		\item \label{ite:mscal} For every $s > 0$ large enough, there is a scale $R_s > 0$ verifying $R_s \geq \frac{1}{2}\sys\pi_1(\RP^3,\bar{g}_s)$ and ${R_s > \kappa/\sqrt{s}}$, such that
		\begin{equation*}
			\mscal_{\bar{g}_s}(x,R_s) \geq s.
		\end{equation*}
		
		\item \label{ite:UW} For every $s > 0$ large enough, every closed embedded surface $\Sigma$ in $(\RP^3,\bar{g}_s)$ such that $[\Sigma] \neq 0 \in H_2(\RP^3;\Z_2)$ has
		\begin{equation*}
			\UW_1(\Sigma) > w,
		\end{equation*}
		 for some constant $w > 0$ (which does not depend on $s$).
	\end{enumerate}
\end{proposition}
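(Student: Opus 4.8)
The plan is to take $(\bar g_s)_{s>0}$ to be a family of Berger metrics on $\RP^3$, collapsing the Hopf circles at a rate $\varepsilon=\varepsilon(s)\to 0$ tuned to $s$. Concretely, write the round metric of constant curvature $1$ on $S^3$ as $g_1=h^*g_{\Sp^2(1/2)}+\theta\otimes\theta$, where $h\colon S^3\to\Sp^2(1/2)$ is the Hopf fibration and $\theta$ is dual to the unit Hopf field; for $\varepsilon\in(0,1]$ set $g_\varepsilon:=h^*g_{\Sp^2(1/2)}+\varepsilon^2\,\theta\otimes\theta$, so each Hopf circle has length $2\pi\varepsilon$. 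The antipodal map of $S^3$ preserves $g_\varepsilon$ and rotates each Hopf fibre by angle $\pi$, so $g_\varepsilon$ descends to a metric on $\RP^3$ with universal Riemannian cover $(S^3,g_\varepsilon)$, and $h$ descends to a Riemannian submersion $h\colon(\RP^3,g_\varepsilon)\to\Sp^2(1/2)$, in particular $1$-Lipschitz. I would record three facts: (i) $g_\varepsilon\le g_1$, hence $\diam(S^3,g_\varepsilon)\le\diam(S^3,g_1)=\pi$; (ii) $\vol(S^3,g_\varepsilon)=\varepsilon\,\vol(S^3,g_1)=2\pi^2\varepsilon$; (iii) for $\varepsilon$ small the shortest non-contractible loop of $(\RP^3,g_\varepsilon)$ is the image of a Hopf fibre, of length $\pi\varepsilon$, so $\sys\pi_1(\RP^3,g_\varepsilon)=\pi\varepsilon$ (a competing loop either projects to a non-contractible loop of the base, of length $\ge\pi$, or must enclose enough area in the base to realise the holonomy $-1$, which again costs a definite amount). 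Then set $\varepsilon(s):=(6/s)^{3/2}$ and $\bar g_s:=g_{\varepsilon(s)}$ for $s$ large (and, say, $\bar g_s:=g_1$ otherwise; only large $s$ is relevant).

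For the first part I would take $R_s:=4$. For $s$ large one has $R_s>\kappa/\sqrt s$ and $R_s\ge\pi\ge\tfrac{\pi}{2}\varepsilon(s)=\tfrac12\sys\pi_1(\RP^3,\bar g_s)$, so only $\mscal_{\bar g_s}(x,R_s)\ge s$ remains to be checked. The simply connected space form of constant scalar curvature $s$ is $\Sp^3(\rho)$ with $\rho=\sqrt{6/s}$, and for $s$ large $\diam\Sp^3(\rho)=\pi\rho<R_s$, so $V^3_s(R_s)=\vol\Sp^3(\rho)=2\pi^2\rho^3=2\pi^2\varepsilon(s)$. By (i) the universal cover $(S^3,g_{\varepsilon(s)})$ has diameter $<R_s$, so every ball $B_{\tilde M}(\tilde x,R_s)$ is all of $S^3$ and, by (ii), has volume $2\pi^2\varepsilon(s)$; hence $\abs{B_{\tilde M}(\tilde x,R_s)}=V^3_s(R_s)$, i.e.\ $\mscal_{\bar g_s}(x,R_s)\ge s$ (in fact $=s$) at every $x$.

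For the second part, let $\Sigma\subset(\RP^3,\bar g_s)$ be closed, embedded, with $[\Sigma]\ne 0\in H_2(\RP^3;\Z_2)\cong\Z_2$. I would first show that the $1$-Lipschitz map $h|_\Sigma\colon\Sigma\to\Sp^2(1/2)$ has mod $2$ degree $1$: the Hopf fibration descends to a circle bundle $\RP^3\to\Sp^2$ of even Euler number (it is the unit tangent bundle of $\Sp^2$), so in the Gysin sequence with $\Z_2$-coefficients cup product with the Euler class vanishes, whence $h^*\colon H^2(\Sp^2;\Z_2)\to H^2(\RP^3;\Z_2)$, and dually $h_*\colon H_2(\RP^3;\Z_2)\to H_2(\Sp^2;\Z_2)$, are isomorphisms; thus $(h|_\Sigma)_*[\Sigma]=h_*[\Sigma]\ne 0$. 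Now set $w:=\pi/16$ and suppose $\UW_1(\Sigma)<w$. Using that $\UW_1$ admits the equivalent description via nerves — from a continuous map $\Sigma\to Y$ into a graph with point-preimages of diameter $<w$ one extracts, by a compactness argument (recall $\Sigma$ is compact), a finite open cover $\set{V_i}$ of $\Sigma$ of multiplicity $\le 2$ with $\diam_\Sigma V_i<w$ — fix such a cover. Since $h|_\Sigma$ is $1$-Lipschitz, each $h(V_i)$, and each $h(V_i\cup V_j)$ with $V_i\cap V_j\ne\varnothing$, has diameter $<2w=\pi/8$, which is below the convexity radius $\pi/4$ of $\Sp^2(1/2)$, so it lies in a strongly convex geodesic ball. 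Then the standard argument (send each vertex of the nerve to a point of the corresponding $h(V_i)$, join adjacent images by the geodesics in these convex balls, and perform the straight-line homotopy piece by piece) shows that $h|_\Sigma$ is homotopic to a map that factors through the nerve of $\set{V_i}$, a $1$-dimensional complex; since $H_2(\,\cdot\,;\Z_2)$ vanishes on a $1$-complex, this forces $\deg(h|_\Sigma)\equiv 0\pmod 2$, a contradiction. Hence $\UW_1(\Sigma)\ge w=\pi/16$, a constant independent of $s$.

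The main obstacle is this last step: passing rigorously from ``point-preimages of small diameter over a graph'' to a genuine multiplicity-$2$ cover by sets of small diameter, and then the homotopy factorisation through its nerve. Both are standard — the first is the equivalence of the two usual definitions of $\UW_1$, which follows by compactness of $\Sigma$, and the second is the classical statement that a map whose restriction to each member of a finite open cover has image in a contractible (here convex) set is homotopic to one factoring through the nerve — but this is where the real content sits. The remaining ingredients (the volume and diameter estimates for Berger spheres, the Gysin computation, the bookkeeping for $R_s$) are routine.
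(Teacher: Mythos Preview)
Your proposal is correct and follows the same overall architecture as the paper: Berger metrics on $\RP^3$, with the Hopf submersion $\bar{\mathcal H}\colon(\RP^3,\bar g_\varepsilon)\to\Sp^2(\tfrac12)$ supplying the $1$-Lipschitz map to the sphere, and the Gysin sequence showing this map is homologically non-trivial on any $\Sigma$ with $[\Sigma]\ne 0$.

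Two differences are worth flagging. For part~(1) you take a constant scale $R_s=4$, using that the universal cover $(\Sp^3,g_\varepsilon)$ has diameter $\le\pi$ so that the ball is the whole sphere; the paper instead takes $R_\varepsilon=\kappa'\varepsilon^{1/3}$ with $\kappa'>\max\{\kappa/\sqrt6,\pi\}$ and the same relation $s=6/\varepsilon^{2/3}$. Both choices work; yours is simpler, the paper's stays closer to the threshold $\pi\rho$. For part~(2) the paper invokes Gromov's non-contracting-maps theorem \cite[Proposition~F$_1$]{Gromov_1988} directly, obtaining $\UW_1(\Sigma)>\pi/4$, whereas you effectively reprove the relevant special case by hand via a multiplicity-$2$ cover and a nerve factorisation, landing at $w=\pi/16$. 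Your version is more self-contained but yields a weaker constant; the step you single out as the obstacle (passing from small-fibre maps to small-diameter covers of bounded multiplicity, and the homotopy through the nerve using convex balls in $\Sp^2(\tfrac12)$) is exactly the content of Gromov's lemma, so citing it as the paper does is the cleaner route.
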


\medskip

We refer the reader to \cite{Alpert_2022,Alpert_Balitskiy_Guth_2024,Alpert_Kei_2017,Guth_2010_Metaphors,Guth_2010_Systolic,Papasoglu_2020,Sabourau_2022} for related works on macroscopic scalar curvature, Urysohn width and volume.

\medskip

The paper is structured as follows. In Section \ref{sec:volumeFunction}, we detail some properties of the function $V^n_s(R)$ which gives the volume of a ball of radius $R$ in the simply connected $n$-dimensional space form of constant scalar curvature $s$. In Section \ref{sec:stability}, we derive an estimate for the volume of balls in almost minimising hypersurfaces. In Section \ref{sec:main_thm} we prove Theorem \ref{thm:main}. Finally, in Section \ref{sec:prolateMetrics} we prove Proposition \ref{pro:controlSystoles} and in Section \ref{sec:BergerMetrics} we detail the construction of Proposition \ref{pro:RP3}.

\begin{notation}
We will denote the closed metric ball in $M$ centered at the point $x$ and of radius~$R > 0$ by $B(x,R)$, and its boundary by $S(x,R) = \partial B(x,R)$. When working in other metric spaces, such as submanifolds with the induced metric or the universal Riemannian cover, we will explicit the metric space as a subindex to avoid confusion. Given a $k$-dimensional submanifold $\Sigma$ in $M$, we will denote its $k$-dimensional volume by $\abs{\Sigma}$. We will denote by $V^n_s(R)$ the volume of any ball of radius $R$ in the simply connected $n$-dimensional space form of constant scalar curvature $s$.  We will denote by
	\begin{equation*}
		b_n = \frac{\pi^{n/2}}{\Gamma(\frac{n}{2}+1)}
	\end{equation*}
the $n$-dimensional volume of the unit ball of $\R^n$, where $\Gamma$ denotes the Gamma function, and by $w_n = (n+1) b_{n+1}$ the $n$-dimensional volume of the unit $n$-sphere.
\end{notation}

\begin{acknowledgements}
	I would like to thank my PhD advisors Florent Balacheff and Stéphane Sabourau for their help and patience when discussing the details of this paper and their reading of the preliminar version.
\end{acknowledgements}

\section{The function $V^n_s(R)$} \label{sec:volumeFunction}

Let $\mathbb{M}^n_\sigma$ denote the simply connected $n$-dimensional space form of constant sectional curvature~$\sigma$. By the Hopf-Rinow theorem, if $\sigma > 0$, $\sigma = 0$ or $\sigma < 0$ then $\mathbb{M}^n_\sigma$ is, up to rescaling of the metric, isometric to the round $n$-sphere, the Euclidean $n$-space or the hyperbolic $n$-space, respectively. The space form $\mathbb{M}^n_\sigma$ has constant scalar curvature equal to $s = n(n-1)\sigma$. Define the radius $\rho$ of~$\mathbb{M}^n_\sigma$ to be $\rho = 1/\sqrt{\abs{\sigma}}$. Recall that $V^n_s(R)$ denotes the volume of any ball of radius $R$ in the space form~$\mathbb{M}^n_\sigma$ of constant sectional curvature $\sigma = s/n(n-1)$, and $w_k$ denotes the $k$-dimensional volume of the round $k$-sphere. The quantity $V^n_s(R)$ can be expressed explicitely in terms of $s \in \R$, the radius~$R > 0$ and the dimension $n$.

\begin{lemma}[{\cite[Section 3.H.3]{GHL_2004}}] \label{lem:volumeFunction}
	Let $s \in \R$ and $R > 0$. Then,
	\begin{equation*}
		V^n_s(R) =\begin{cases}
					w_{n-1} {\displaystyle \int_0^R \left(\frac{\sin{(\sqrt{\sigma}t)}}{\sqrt{\sigma}}\right)^{n-1}} dt, & \text{if }  s > 0 \text{ and } R < \pi \rho \\
					w_n \rho^n, & \text{if } s > 0 \text{ and } R \geq \pi \rho \\
					b_n R^n, & \text{if } s = 0 \\
					w_{n-1} {\displaystyle \int_0^R \left(\frac{\sinh{(\sqrt{-\sigma}t)}}{\sqrt{-\sigma}}\right)^{n-1}} dt, & \text{if }  s < 0 \\
				\end{cases}.
\end{equation*}
\end{lemma}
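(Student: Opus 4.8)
The plan is to reduce the statement to a standard computation in geodesic polar coordinates, using the description of the space forms $\mathbb{M}^n_\sigma$ recalled above, where $\sigma = s/(n(n-1))$.

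First I would fix a point $p \in \mathbb{M}^n_\sigma$ and recall that, since the ambient manifold has constant sectional curvature $\sigma$, its metric in geodesic polar coordinates centred at $p$ takes the form $dr^2 + f_\sigma(r)^2 g_{\Sp^{n-1}}$, where $g_{\Sp^{n-1}}$ is the round metric on the unit $(n-1)$-sphere and $f_\sigma$ is the solution of the Jacobi equation $f'' + \sigma f = 0$ with $f(0) = 0$ and $f'(0) = 1$; explicitly, $f_\sigma(r) = \sin(\sqrt{\sigma}\,r)/\sqrt{\sigma}$ if $\sigma > 0$, $f_\sigma(r) = r$ if $\sigma = 0$, and $f_\sigma(r) = \sinh(\sqrt{-\sigma}\,r)/\sqrt{-\sigma}$ if $\sigma < 0$. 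Consequently the Riemannian volume element equals $f_\sigma(r)^{n-1}\,dr\,d\omega$, where $d\omega$ is the volume element of the unit $(n-1)$-sphere.

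Next, whenever $\sigma \leq 0$, or $\sigma > 0$ and $R < \pi\rho$ with $\rho = 1/\sqrt{\sigma}$, the exponential map $\exp_p$ restricts to a diffeomorphism from the Euclidean ball of radius $R$ in $T_p\mathbb{M}^n_\sigma$ onto the geodesic ball $B(p,R)$; in the positively curved case this is precisely the fact that the round sphere of radius $\rho$ has injectivity radius $\pi\rho$ and cut locus reduced to a single point. Pulling back the volume element and carrying out the angular integration, which contributes the factor $w_{n-1}$, yields
\begin{equation*}
	V^n_s(R) = w_{n-1}\int_0^R f_\sigma(t)^{n-1}\,dt,
\end{equation*}
which gives the first, third and fourth cases of the formula after inserting the value of $f_\sigma$; for $s = 0$ one moreover uses $w_{n-1}\int_0^R t^{n-1}\,dt = (w_{n-1}/n)\,R^n = b_n R^n$, since $w_{n-1} = n b_n$. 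Finally, in the remaining case $\sigma > 0$ and $R \geq \pi\rho$, the geodesic ball $B(p,R)$ exhausts $\mathbb{M}^n_\sigma$, which has diameter $\pi\rho$, so $V^n_s(R)$ equals the total volume of the round $n$-sphere of radius $\rho$, namely $w_n\rho^n$. The computation is routine; the only point deserving attention is the identification of the geodesic ball with the image under $\exp_p$ of a metric ball in the positively curved regime, i.e. locating the cut locus of the round sphere.
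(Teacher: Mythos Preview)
Your argument is correct and is exactly the standard computation in geodesic polar coordinates that underlies the cited reference. The paper does not supply its own proof of this lemma; it simply quotes the formula from \cite[Section 3.H.3]{GHL_2004}, so there is nothing further to compare.
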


From the explicit form of Lemma \ref{lem:volumeFunction}, we obtain the following corollary.

\begin{corollary} \label{cor:propertiesVolumeFunction}
	The function $V^n_s(R)$ satisfies the following properties.
	\begin{enumerate}
		\item \label{ite:rescaling} Let $R > 0$ and $s \in \R$. Then, for any $\lambda > 0$,
	\begin{equation*}
		V^n_s\left(\frac{R}{\lambda}\right) = \frac{1}{\lambda^n} V^n_{s/\lambda^2}(R).
	\end{equation*}
	In particular, $V^n_s(R) = V^n_{sR^2}(1) R^n$.
		\item \label{ite:strictlyDecreasing} At every fixed scale $R > 0$, $s \mapsto V^n_s(R)$ is a strictly decreasing function (see Figure \ref{fig:V_vs_scal}), which verifies
	\begin{equation*}
		\lim_{s \to -\infty} V^n_s(R) = +\infty \text{ and } \lim_{s \to +\infty} V^n_s(R) = 0.
	\end{equation*}
	\end{enumerate}
\end{corollary}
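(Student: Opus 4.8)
The plan is to derive both properties directly from the explicit formula in Lemma \ref{lem:volumeFunction}. For property \eqref{ite:rescaling}, the key observation is that rescaling the radius by $1/\lambda$ should be compensated by rescaling the curvature. Recall that $s = n(n-1)\sigma$, so $s/\lambda^2$ corresponds to the sectional curvature $\sigma/\lambda^2$, whose associated radius is $\rho' = \lambda/\sqrt{|\sigma|} = \lambda\rho$. First I would treat the case $s > 0$ with $R < \pi\rho$ (equivalently $R/\lambda < \pi\rho = \pi\rho'/\lambda$, so the corresponding case of the formula applies on both sides): starting from
\begin{equation*}
	V^n_s\!\left(\frac{R}{\lambda}\right) = w_{n-1} \int_0^{R/\lambda} \left(\frac{\sin(\sqrt{\sigma}\,t)}{\sqrt{\sigma}}\right)^{n-1} dt,
\end{equation*}
I would substitute $t = u/\lambda$, giving $dt = du/\lambda$ and turning the integrand into $\lambda^{-(n-1)}\bigl(\sin(\sqrt{\sigma}\,u/\lambda)/\sqrt{\sigma}\bigr)^{n-1} = \lambda^{-n}\bigl(\sin(\sqrt{\sigma/\lambda^2}\,u)/\sqrt{\sigma/\lambda^2}\bigr)^{n-1}$, which after pulling out $\lambda^{-n}$ is exactly $\lambda^{-n} V^n_{s/\lambda^2}(R)$. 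The same substitution handles the $\sinh$ case $s<0$ verbatim, the case $s=0$ is immediate from $b_n(R/\lambda)^n = \lambda^{-n} b_n R^n$, and the saturated case $s>0$, $R \geq \pi\rho$ follows from $w_n\rho^n = \lambda^{-n} w_n (\lambda\rho)^n$ together with the observation that $R/\lambda \geq \pi\rho$ iff $R \geq \pi(\lambda\rho) = \pi\rho'$. The special case $V^n_s(R) = V^n_{sR^2}(1)R^n$ is then obtained by taking $\lambda = 1/R$.

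For property \eqref{ite:strictlyDecreasing}, by the rescaling identity it suffices to analyze $s \mapsto V^n_{sR^2}(1)$ at fixed $R$, or equivalently $s \mapsto V^n_s(1)$; monotonicity in $s$ at scale $1$ transfers to every scale. I would argue that $s \mapsto \sigma = s/n(n-1)$ is an increasing bijection of $\R$, so it is enough to show $\sigma \mapsto V^n_{n(n-1)\sigma}(1)$ is strictly decreasing. The cleanest route is a pointwise comparison of integrands: for fixed $t \in (0,1)$, the map $\sigma \mapsto \bigl(\tfrac{\sin(\sqrt{\sigma}\,t)}{\sqrt{\sigma}}\bigr)^{n-1}$ (interpreted as $t^{n-1}$ at $\sigma=0$ and as the $\sinh$ expression for $\sigma<0$) is strictly decreasing in $\sigma$. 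This reduces to the elementary one-variable fact that $x \mapsto \tfrac{\sin(\sqrt{x})}{\sqrt{x}}$ is strictly decreasing on $(-\infty, \pi^2)$ — which one checks from its power series $\sum_{k\geq 0} \tfrac{(-x)^k}{(2k+1)!}$ having all-negative derivative terms for $x$ in that range, or by a direct derivative computation — raised to the positive power $n-1$. Integrating the strict pointwise inequality over $t \in (0,1)$ gives strict monotonicity of $V^n_s(1)$ as long as $R = 1 < \pi\rho$, i.e. while we are below saturation; and once $s$ is large enough that $1 \geq \pi\rho$, the value is $w_n\rho^n$ with $\rho = 1/\sqrt{\sigma} \to 0$, which continues to strictly decrease, matching continuously at the saturation threshold. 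The limits are then read off directly: as $s \to +\infty$, $\rho \to 0$ forces $V^n_s(1) = w_n\rho^n \to 0$; as $s \to -\infty$, $\sqrt{-\sigma} \to \infty$ makes $\sinh(\sqrt{-\sigma}\,t)/\sqrt{-\sigma} \to \infty$ for each $t > 0$, so the integral diverges by monotone convergence.

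The main obstacle — though it is minor — is bookkeeping the case splits cleanly, in particular verifying that the saturation condition $R \lessgtr \pi\rho$ is preserved under the rescaling $\lambda$ and that the piecewise definition glues continuously at $R = \pi\rho$, so that the monotonicity statement holds globally in $s$ rather than just on each piece. Everything else is the elementary lemma that $\sin(\sqrt{x})/\sqrt{x}$ is strictly decreasing on $(-\infty,\pi^2)$, which I would either cite or dispatch with a short power-series argument.
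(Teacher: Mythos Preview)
Your proposal is correct and matches the paper's approach: the paper does not give a detailed proof at all, stating only that the corollary follows ``from the explicit form of Lemma~\ref{lem:volumeFunction}'', and your plan carries out precisely that verification case by case. One small remark: your parenthetical about the power series of $\sin(\sqrt{x})/\sqrt{x}$ having ``all-negative derivative terms'' is only literally true for $x<0$; for $x\in(0,\pi^2)$ the derivative series alternates, so you should rely on the direct derivative computation you also mention (the standard fact that $y\mapsto\sin y/y$ is strictly decreasing on $(0,\pi)$).
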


\begin{figure}[ht]
	\includegraphics[width=0.75\textwidth]{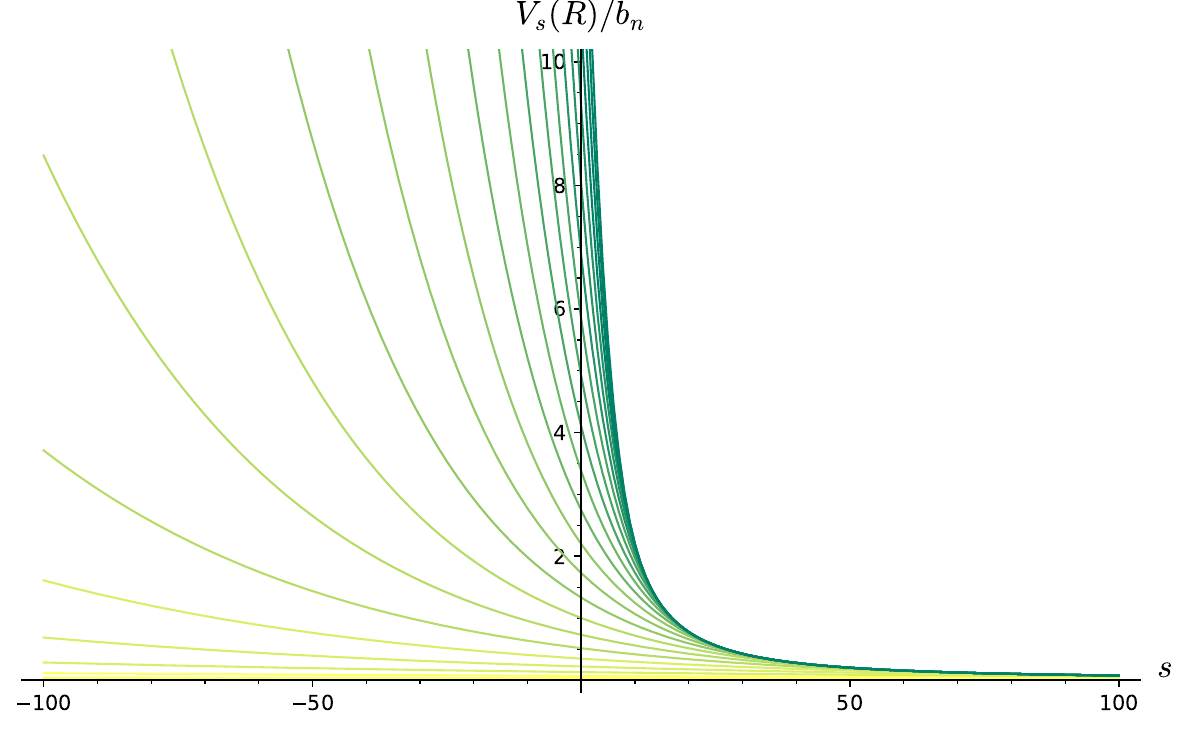}
	\caption{The function $s \mapsto V^n_s(R)/b_n$ for $n=3$ and $R = 0.1 , 0.2, \dots , 2.5$.}
	\label{fig:V_vs_scal}
\end{figure}

\section{The Stability Lemma} \label{sec:stability}

The first part of the proof of Theorem \ref{thm:main} is based on the fact that hypersurfaces that are almost minimising satisfy a convenient stability inequality, that we will prove in this section. We start by rigorously definining the notion of almost minimising hypersurface.

\begin{definition} \label{def:almost_minimising} Let $G = \Z_2$ or $\Z$. Let $M$ be a complete Riemannian $n$-manifold with $H_{n-1}(M;G) \neq 0$. Let $\Sigma$ be a closed hypersurface embedded in $M$ such that $[\Sigma] \neq 0 \in H_{n-1}(M;G)$. The hypersurface $\Sigma$ is \emph{$\delta$-almost minimising in its $G$-homology class} if any embedded hypersurface $\Sigma'$ homologous to $\Sigma$ in $H_{n-1}(M;G)$ satisfies
	\begin{equation*}
		\abs{\Sigma} \leq \abs{\Sigma'} + \delta.
	\end{equation*}
\end{definition}	

A crucial step in the proof of Theorem \ref{thm:main} is the Stability Lemma, which consists of an estimate of the volume of metric balls on an almost minimising surface in terms of the volume of balls in the ambient manifold. Originally, the Stability Lemma was developed for $\Z_2$-coefficients by Guth in~\cite{Guth_2010_Systolic} in order to give a shorter proof of Gromov's Isosystolic Inequality for the $n$-torus~\cite{Gromov_1983}. Recently, the Stability Lemma it has been extended to $\Z$-coefficients by Alpert in~\cite{Alpert_2022}.

\begin{lemma}[Stability Lemma \cite{Guth_2010_Systolic,Alpert_2022}] \label{lem:stability}
	Let $G = \Z_2$ or $\Z$. Let $M$ be a non-simply connected complete Riemannian $n$-manifold such that $H_{n-1}(M;G) \neq 0$ and $\sys\pi_1(M) > 0$. Let $\Sigma$ be a hypersurface embedded in $M$ which is $\delta$-almost minimising in its $G$-homology class. Fix $r > 0$ and~$R > 0$ such that $0 < r < R < \frac{1}{2}\sys\pi_1(M)$. Then, for every $x \in \Sigma$,
	\begin{equation*}
		\abs{B_\Sigma (x,r)} \leq \frac{1}{R-r} \abs{B(x,R)} + \delta.
	\end{equation*}
\end{lemma}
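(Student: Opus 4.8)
The plan is to build, for almost every radius $t\in(r,R)$, a competitor hypersurface $\Sigma'_t$ obtained from $\Sigma$ by surgery along the metric sphere $S(x,t)$, and then to combine the $\delta$-almost minimality of $\Sigma$ with the coarea inequality in the ambient ball $B(x,R)$. Concretely, for a regular value $t$ of the restriction to $\Sigma$ of the distance function $\mathrm{d}(x,\cdot)$ (which is a.e.\ $t$ by Sard's theorem), the set $Z_t:=\Sigma\cap S(x,t)$ is a closed $(n-2)$-submanifold, $\Sigma\setminus B(x,t)$ is a compact hypersurface with boundary $Z_t$, and I would replace the inner piece $\Sigma\cap B(x,t)$ by a hypersurface $P_t\subset S(x,t)$ with $\partial P_t=Z_t$, setting $\Sigma'_t:=(\Sigma\setminus B(x,t))\cup P_t$. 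Since $P_t$ lies in the metric sphere, $\abs{P_t}\leq\abs{S(x,t)}$, and hence
\begin{equation*}
	\abs{\Sigma'_t}=\abs{\Sigma}-\abs{\Sigma\cap B(x,t)}+\abs{P_t}\leq\abs{\Sigma}-\abs{\Sigma\cap B(x,t)}+\abs{S(x,t)}.
\end{equation*}

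The point where the hypothesis $R<\tfrac12\sys\pi_1(M)$ enters is in making this surgery legitimate, that is, in producing such a capping $P_t$ and in checking that $[\Sigma'_t]=[\Sigma]$ in $H_{n-1}(M;G)$, so that $\Sigma'_t$ is an admissible competitor. Indeed, the bound $R<\tfrac12\sys\pi_1(M)$ forces the restriction of the universal covering projection to any lift of $B(x,R)$ to be an isometry onto $B(x,R)$: otherwise two distinct preimages of a point, joined through the center by two minimising geodesics of length $<R$, would project to a non-contractible loop of length $<2R<\sys\pi_1(M)$, a contradiction. Consequently $\Sigma\cap B(x,R)$ lifts, the cycle $Z_t$ bounds in $S(x,t)$, and the difference $\Sigma'_t-\Sigma$ is a $G$-boundary supported in $\overline{B(x,t)}$. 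This is precisely the content of the Stability Lemma as established by Guth for $G=\Z_2$ in \cite{Guth_2010_Systolic} and extended by Alpert to $G=\Z$ in \cite{Alpert_2022}, the coefficient group $\Z$ requiring additional care with the orientations of the pieces.

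Granting the surgery, the conclusion follows by a short computation. For every $t\geq r$ one has the inclusions $B_\Sigma(x,r)\subseteq\Sigma\cap B(x,r)\subseteq\Sigma\cap B(x,t)$, since distances in $\Sigma$ dominate distances in $M$; hence $\abs{\Sigma\cap B(x,t)}\geq\abs{B_\Sigma(x,r)}$. Plugging this and the displayed area bound into the $\delta$-almost minimality $\abs{\Sigma}\leq\abs{\Sigma'_t}+\delta$ yields
\begin{equation*}
	\abs{B_\Sigma(x,r)}\leq\abs{S(x,t)}+\delta\qquad\text{for a.e.\ }t\in(r,R).
\end{equation*}
Finally, the coarea formula gives $\int_r^R\abs{S(x,t)}\,dt=\abs{B(x,R)}-\abs{B(x,r)}\leq\abs{B(x,R)}$, so there is a value $t\in(r,R)$ with $\abs{S(x,t)}\leq\frac{1}{R-r}\abs{B(x,R)}$; choosing such a $t$ in the previous inequality gives $\abs{B_\Sigma(x,r)}\leq\frac{1}{R-r}\abs{B(x,R)}+\delta$, as desired.

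The main obstacle is the second step: turning the heuristic surgery into a rigorous statement, that is, producing the capping $P_t$ inside $S(x,t)$ with the correct boundary and verifying that $[\Sigma'_t]=[\Sigma]$. This is exactly where the smallness of $R$ relative to $\sys\pi_1(M)$ is used, and where the two coefficient regimes genuinely differ, which is why I would quote it from \cite{Guth_2010_Systolic,Alpert_2022}. The remaining ingredients — Sard's theorem for the generic regularity of the slices, the elementary inclusion of intrinsic balls into extrinsic ones, and the coarea inequality — are routine.
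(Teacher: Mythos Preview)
Your outline follows the paper's argument: choose a good radius $t\in(r,R)$ via the coarea formula, perform surgery on $\Sigma$ inside $B(x,t)$ using a cap contained in $S(x,t)$, and invoke $\delta$-almost minimality. The order is reversed (you surger for a.e.\ $t$ and then pick a good one, whereas the paper fixes the good $t$ first), but this is immaterial.

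Where your sketch diverges from the paper is in the justification of the surgery. That $B(x,R)$ lifts isometrically to $\tilde M$ is correct and does imply that every loop in $B(x,t)$ is contractible in $M$, hence has zero algebraic intersection with $\Sigma$; but this alone does not give ``$Z_t$ bounds in $S(x,t)$'' or ``$\Sigma'_t-\Sigma$ is a $G$-boundary'', because $B(x,t)$ need not be a topological ball or even simply connected. The paper bridges this gap explicitly: it reaches the same zero-intersection conclusion via Gromov's Curve Factoring Lemma, then applies Lefschetz duality $H_{n-1}(B(x,t),S(x,t);\Z)\cong H^1(B(x,t);\Z)$ together with the Universal Coefficient Theorem to deduce $[\Sigma\cap B(x,t)]=0$ in relative homology, which is what actually produces the cap in $S(x,t)$.

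For $G=\Z$ you also understate what is needed. The chain in $S(x,t)$ homologous to $\Sigma\cap B(x,t)$ may have the form $\sum_i m_i Z_i$ with nontrivial multiplicities $m_i$, so the inequality $|P_t|\le|S(x,t)|$ is not automatic and a single embedded cap may not exist. The paper handles this by organising $B(x,t)\setminus\Sigma$ into levels $L_1,\dots,L_N$ (the well-definedness of the level uses precisely that the intersection number with $\Sigma$ vanishes on loops in $B(x,t)$), grouping the components of $\Sigma\cap B(x,t)$ into dividers $D_i$ separating consecutive levels, replacing each $D_i$ by an area-minimising homologous combination, and finally observing that $B_\Sigma(x,r)$ lies in a single divider, which then satisfies $|D_i|\le|S(x,t)|+\delta$. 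This is more than ``additional care with the orientations of the pieces''.
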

We reproduce the proofs of \cite{Guth_2010_Systolic} and \cite{Alpert_2022}, giving more detail.

\begin{proof}
	Let $x \in \Sigma$ be a point. The Coarea Formula \cite[Theorem 13.4.2]{1988_Burago_Zalgaller} for the function $d(x,\cdot)$ giving the distance to the point $x$ is
	\begin{equation*}
		\abs{B(x,R)} = \int_0^R \abs{S(x,\tau)} d\tau.
	\end{equation*}
	Hence there exists a radius~$t \in (r,R)$ for which
	\begin{equation} \label{eqn:coarea}
		\abs{B(x,R)} \geq \int_r^R \abs{S(x,\tau)} d\tau = (R-r) \abs{S(x,t)}.
	\end{equation}
	Consider the closed ball $B(x,t)$ of radius $t$. We will use the following lemma due to Gromov to prove that every closed curve lying in $B(x,t)$ intersects $\Sigma$ trivially.
	
	\begin{lemma}[{Gromov's Curve Factoring Lemma \cite[Proposition 5.28]{Gromov_1981}}] \label{lem:curveFactoring}
		Let $\gamma$ be a closed curve contained in the closed geodesic ball $B(x,t)$ and let $\varepsilon > 0$. Then $\gamma$ is $\Z$-homologous to a 1-cycle $\sum_i \gamma_i$, where each $\gamma_i$ is a closed curve of length $\ell{(\gamma_i)} < 2t + \varepsilon$.
	\end{lemma}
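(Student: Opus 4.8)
The plan is to carry out the standard construction of coning $\gamma$ off to the center $x$ of the ball. I would first parametrise $\gamma$ on $[0,1]$; assuming, as is automatic in the systolic applications, that $\gamma$ is rectifiable, additivity of length lets me pick a partition $0 = s_0 < s_1 < \dots < s_N = 1$ fine enough that every sub-arc $\alpha_i := \gamma|_{[s_{i-1},s_i]}$ has $\ell(\alpha_i) < \varepsilon$; set $p_i := \gamma(s_i)$, so that $p_0 = p_N$.

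Since $M$ is complete and each $p_i$ lies in the closed ball $B(x,t)$, the Hopf--Rinow theorem supplies a minimising geodesic $\sigma_i$ from $x$ to $p_i$ with $\ell(\sigma_i) = d(x,p_i) \leq t$, and at the repeated endpoint I choose $\sigma_0 = \sigma_N$. Writing $\bar{\sigma}_i$ for $\sigma_i$ run backwards, I would set
\begin{equation*}
	\gamma_i \;:=\; \sigma_{i-1} * \alpha_i * \bar{\sigma}_i , \qquad i = 1, \dots, N ,
\end{equation*}
a closed curve based at $x$ satisfying
\begin{equation*}
	\ell(\gamma_i) \;=\; \ell(\sigma_{i-1}) + \ell(\alpha_i) + \ell(\sigma_i) \;\leq\; t + \ell(\alpha_i) + t \;<\; 2t + \varepsilon .
\end{equation*}

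It then remains to check that the $1$-cycle $\sum_{i=1}^{N} \gamma_i$ is $\Z$-homologous to $\gamma$. Using only the standard chain-level homologies --- a concatenation of paths is homologous to the sum of its factors, and the reverse of a path is homologous to its negative --- one gets
\begin{equation*}
	\sum_{i=1}^{N} \gamma_i \;\sim\; \sum_{i=1}^{N} \big( \sigma_{i-1} + \alpha_i - \sigma_i \big) \;=\; \Big( \sum_{i=1}^{N} \alpha_i \Big) + (\sigma_0 - \sigma_N) \;\sim\; \gamma ,
\end{equation*}
since the geodesic contributions telescope to $\sigma_0 - \sigma_N = 0$ and $\sum_i \alpha_i \sim \gamma$. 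I do not foresee a genuine obstacle: the only delicate points are (i) using the \emph{same} geodesic at the repeated endpoint, so that the cancellation is exact, and (ii) accommodating a merely continuous $\gamma$, where one first replaces each $\alpha_i$ by a minimising geodesic joining its endpoints --- homotopic to $\alpha_i$ rel endpoints once the partition is fine compared with a convexity radius along $\gamma$, hence harmless in homology. One should, however, resist the temptation to argue that each $\gamma_i$ is itself nullhomologous by filling it with a geodesic cone over $\alpha_i$ based at $x$: such a cone need not even be continuous, as $x$ may lie in the cut locus of points of $\gamma$, and indeed --- since $\sum_i \gamma_i$ may represent a nonzero class --- the individual loops $\gamma_i$ need not be nullhomologous; the argument must stay at the level of $1$-chains.
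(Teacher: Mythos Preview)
Your argument is the standard coning construction and is correct. Note, however, that the paper does not supply its own proof of this lemma: it is merely quoted, with attribution to Gromov's \emph{Structures m\'etriques pour les vari\'et\'es riemanniennes}, and then used as a black box inside the proof of the Stability Lemma. There is therefore nothing in the paper to compare your proof against; what you have written is precisely the classical justification one finds in the systolic literature, and it would serve well as a self-contained replacement for the bare citation.
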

	
	Suppose that $\gamma$ is a closed curve lying in $B(x,t)$ with non-trivial intersection with $\Sigma$. Let $\varepsilon = \sys{(M)} -2R$. By Lemma \ref{lem:curveFactoring}, the closed curve $\gamma$ is $\Z$-homologous to a sum $\sum_i \gamma_i$ of closed curves $\gamma_i$ of length~$\ell{(\gamma_i)} < 2t + \varepsilon$. Since $\gamma$ intersects non-trivially the hypersurface $\Sigma$, one curve~$\gamma_j$ in the sum~$\sum_i \gamma_i$ has non-trivial intersection with $\Sigma$. In particular, the curve $\gamma_j$ has to be non contractible. However,
	\begin{equation*}
		\ell{(\gamma_j)} < 2t + \varepsilon \leq \sys\pi_1{(M)},
	\end{equation*}
	which is a contradiction.
	
	Consider the cycle $\Sigma \cap B(x,t)$ in $B(x,t)$ relative to the boundary $S(x,t)$. Notice that, since $t < \frac{1}{2} \sys\pi_1(M)$,  the ball $B(x,t)$ is orientable. Otherwise the ball $B(x,t)$ would contain a closed curved along which the orientation of $B(x,t)$ (and of the manifold $M$) is reversed. Such a curve is non-contractible in $M$. Then, by Gromov's Curve Factoring Lemma \ref{lem:curveFactoring}, there would exist a non-contractible closed curve $\gamma_i$ of length $\ell(\gamma_i) < \sys\pi_1(M)$, which is a contradiction. Hence, by Lefschetz's Duality \cite[Theorem 3.43]{Hatcher_2002} and the Universal Coefficient Theorem \cite[Theorem 3.2]{Hatcher_2002}, there are isomorphisms
	\begin{equation*}
		H_{n-1}(B(x,t),S(x,t);\Z) \simeq H^1(B(x,t);\Z) \simeq H_1(B(x,t);\Z).
	\end{equation*}
	Since every 1-cycle in $B(x,t)$ intersects $\Sigma$ trivially, we have
	\begin{equation*}
		[\Sigma \cap B(x,t)] = 0 \in H_{n-1}(B(x,t),S(x,t);\Z),
	\end{equation*}
	which implies that the chain $\Sigma \cap B(x,t)$ is $\Z$-homologous to a chain $\sum_i m_i Z_i$ in $S(x,t)$, where $Z_i \subset S(x,t)$ are connected components of $S(x,t) \setminus \Sigma$ and $m_i \in \Z$. We will discuss the cases $G = \Z_2$ and $G=\Z$ separately hereafter.
	
	\medskip
	
	Let us first discuss the case $G = \Z_2$. Projecting to the chain complex with $\Z_2$-coefficients, we obtain that the chain $\Sigma \cap B(x,t)$ is $\Z_2$-homologous to a chain $\sum_i Z_i$. Consider the embedded hypersurface $\Sigma'$ obtained from $\Sigma$ by replacing $\Sigma \cap B(x,t)$ with~$\cup_i Z_i \subset S(x,t)$ and smoothing out the resulting cycle. Since the hypersurface $\Sigma$ is $\Z_2$-homologous to $\Sigma'$, the $\delta$-almost minimality of~$\Sigma$ implies
	\begin{equation*}
		\abs{\Sigma \cap B(x,t)} \leq \abs{\cup_i Z_i} + \delta \leq \abs{S(x,t)} + \delta.
	\end{equation*}
	We conclude by noting that $B_\Sigma(x,r) \subset \Sigma \cap B(x,t)$ and using the inequality \eqref{eqn:coarea}.
	
	\medskip	
	
	Finally, we address the case $G = \Z$. For $\Z$-coefficients, the chain $\Sigma \cap B(x,t)$ may fail to be~$\Z$-homologous to a chain~$\sum_i Z_i$ in~$S(x,t)$ consisting of a disjoint union of connected components $Z_i$ of~$S(x,t) \setminus \Sigma$. Still, the different connected components of $\Sigma \cap B(x,t)$ may be grouped into a collection $D_1, \dots, D_{N-1}$ such that~$\abs{D_i} \leq \abs{S(x,t)} + \delta$ for every $i \in \set{1, \dots, N-1}$.
	
	We proceed as follows. Since every closed curve lying in $B(x,t)$ has trivial intersection with $\Sigma$, one can group the connected components of $B(x,t) \setminus \Sigma$ into levels $L_1, \dots, L_N$ in a way such that every path starting at~$L_i$ and ending at $L_j$ has signed intersection number with $\Sigma$ equal to~$j-i$. For each $i \in \set{1, \dots, N}$, define~$S_i := L_i \cap S(x,t)$. Finally, group the connected components of~$\Sigma \cap B(x,t)$ into dividers~$D_1, \dots, D_{N-1}$ so that the divider $D_i$ is the common boundary between $L_i$ and~$L_{i+1}$ for~$i \in \set{1,\dots, N-1}$, see Figure \ref{fig:ball}. For convenience, we set $D_0 = \Oslash$ and $D_N = \Oslash$.
	
	\begin{figure}[ht]
		\centering
    \def\svgwidth{0.65\columnwidth}
    \import{./Figures/}{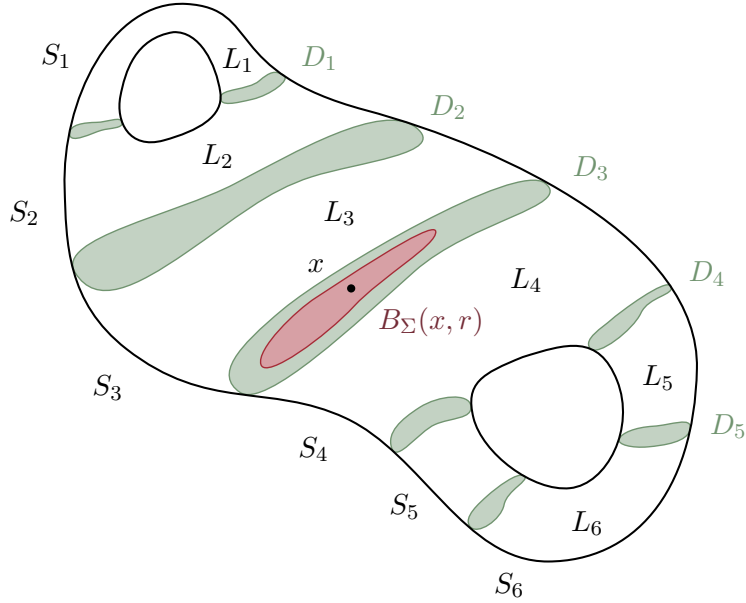}

		\caption{Subdivision of the ball $B(x,t)$ into levels $L_1, \dots, L_N$ separated by the dividers $D_1, \dots, D_N$.}
		\label{fig:ball}
	\end{figure}
	
	For every $i \in \set{1, \dots, N-1}$ and every $k \in \set{0, \dots, N}$, consider the chain $D_{i,k}$ defined by
	\begin{equation*}
		D_{i,k} :=\begin{cases}
					D_k + \sum_{j=k+1}^i S_j , & \text{if }  k < i \\
					D_i, & \text{if } k = i \\
					D_k + \sum_{j = i+1}^k S_j, & \text{if }  k > i \\
				\end{cases}.
	\end{equation*}
	In particular, we have $D_{i,0} = \sum_{j = 0}^i S_j$ and $D_{i,N} = \sum_{j=i+1}^N S_j$ the two connected components of~$S(x,t) \setminus D_i$. Notice that, for every $i \in \set{1, \dots, N-1}$ and every $k \in \set{0, \dots, N}$, the chain $D_i$ is~$\Z$-homologous to $D_{i,k}$. For each $i \in \set{1, \dots, N-1}$, let $k_i \in \set{0, \dots, N}$ be such that
	\begin{equation*}
		\abs{D_{i,k_i}} = \min_{k \in \set{0, \dots, N}} \abs{D_{i,k}},
	\end{equation*}
	and define $D'_i := D_{k_i}$. That is, for each $i \in \set{1, \dots, N-1}$, the chain $D'_i$ denotes the combination~$D_{i,k}$ of least area. By the minimality of the $D_{k_i}$ with respect to the combinations $D_{i,k}$, one can always assume that $0 \leq k_1 \leq \dots \leq k_{N-1} \leq N$.
	
	Now, modify the hypersurface $\Sigma$ by replacing each $D_i$ by the corresponding $D'_i$ and perturb the resulting hypersurface to make it embedded, see Figure \ref{fig:deformation}.
	
	\begin{figure}[ht]
		\centering
    \def\svgwidth{1\columnwidth}
    \import{./Figures/}{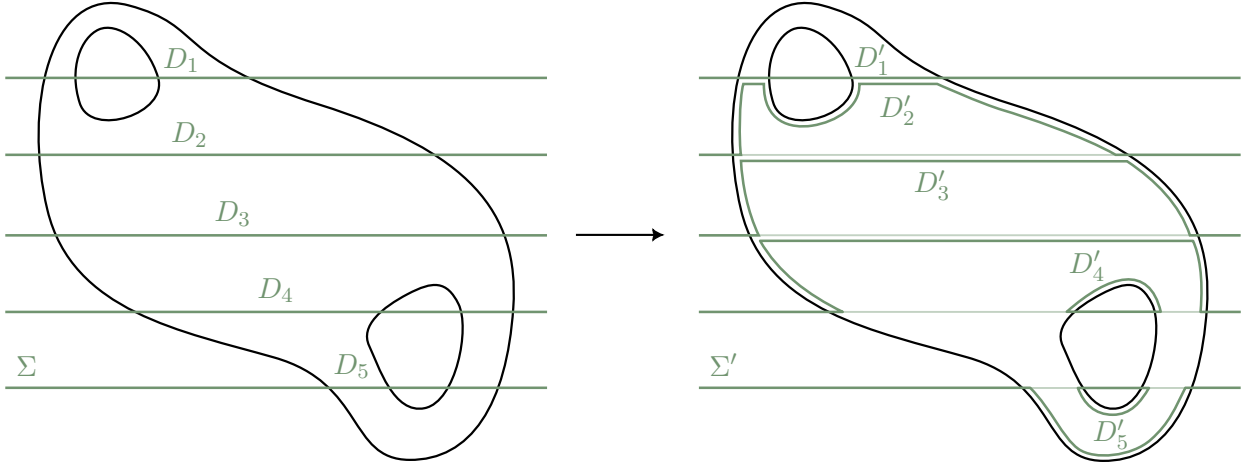}

		\caption{Modification of the surface $\Sigma$ by replacing each $D_i$ by the corresponding $D'_i$.}
		\label{fig:deformation}
	\end{figure}
	
 We obtain an embedded hypersurface $\Sigma'$ which is $\Z$-homologous to the original hypersurface $\Sigma$. By the $\delta$-almost minimality of $\Sigma$, we have
	\begin{equation}
	 \label{ine:almost_minimality}
		\sum_{i = 1}^{N-1} \abs{D_i} \leq \sum_{i = 1}^{N-1} \abs{D'_i} + \delta.
	\end{equation}
	From the inequality \eqref{ine:almost_minimality} and the minimality of the $D'_i$, it follows that, for every $i \in \set{1, \dots, N-1}$,
	\begin{equation*}
		\abs{D_i} \leq \abs{D'_i} + \delta.
	\end{equation*}
	The minimality of $D'_i$ implies that $\abs{D'_i} \leq \sum_{j=0}^i \abs{S_j}$ and $\abs{D'_i} \leq \sum_{j=i+1}^N \abs{S_j}$. We derive that for every $i \in \set{1, \dots, N-1}$,
	\begin{equation*}
		\abs{D_i} \leq \abs{S(x,t)} + \delta.
	\end{equation*}
	We conclude by observing that there is an $i \in \set{1, \dots, N-1}$ such that $B_\Sigma (x,r) \subset D_i$ and using the inequality \eqref{eqn:coarea}.
\end{proof}

\section{Proof of the main theorem} \label{sec:main_thm}

Now we prove Theorem \ref{thm:main}.

\begin{proof}[Proof of Theorem \ref{thm:main}]
	Let $G = \Z_2$ or $\Z$. Recall that $M$ is a non-simply connected complete Riemannian $n$-manifold with $H_{n-1}(M;G) \neq 0$ and $\sys\pi_1(M) > 0$. Fix any non-trivial homology class $h \in H_{n-1}(M;G)$. Recall that every codimension 1 homology class with coefficients either in~$\Z_2$ or in $\Z$ can be represented by a smooth closed embedded hypersurface. Let $\Sigma$ be a~$\delta$-almost minimising closed embedded hypersurface representing~$h$. Fix a point $x \in \Sigma$ and consider two radii $0 < r < R < \frac{1}{2}\sys\pi_1(M)$ to be determined later. Let $B_\Sigma (x,r)$ be the metric ball centered at the point $x$ of radius $r$ with respect to the induced metric on $\Sigma$. The Stability Lemma \ref{lem:stability} together with the lower bound on the macroscopic scalar curvature of $M$ at point $x$ and scale $R$ imply
	\begin{equation*} \label{eqn:main_stability}
		\abs{B_\Sigma (x,r)} \leq \frac{1}{R-r} V^n_s(R) + \delta.
	\end{equation*}
	If, given $0 < r < R$ and $s > 0$, the inequality
	\begin{equation} \label{eqn:main_estimateVolumes}
		\frac{1}{R-r} V^n_s(R) < c_{n-1} r^{n-1}
	\end{equation}
	holds, then Theorem \ref{thm:Guth2017} applied to the $\delta$-almost minimising hypersurface $\Sigma$ for $\delta > 0$ small enough implies that $\UW_{n-2}(\Sigma) \leq r$. By Corollary \ref{cor:propertiesVolumeFunction}.\eqref{ite:rescaling}, the inequality \eqref{eqn:main_estimateVolumes} is equivalent to
	\begin{equation} \label{eqn:main_estimateVolumes2}
		\frac{1}{(r/R)^{n-1}} \,\cdot \frac{1}{1-r/R} V^n_{sR^2} (1)< c_{n-1}.
	\end{equation}
	The value of the inner radius $r$ that makes the left-hand term in the inequality \eqref{eqn:main_estimateVolumes2} as small as possible is $r = \frac{n-1}{n} R$. In this case, the inequality \eqref{eqn:main_estimateVolumes2} becomes
	\begin{equation*}
		V^n_{sR^2}(1) < \frac{(n-1)^{n-1}}{n^n} c_{n-1},
	\end{equation*}
	which is equivalent to $sR^2 > \kappa_n := f_n \left(\frac{(n-1)^{n-1}}{n^n} c_{n-1}\right)$, where $f_n:(0,\infty) \rightarrow \R$ is the inverse function of the map $s \mapsto V_s^n(1)$, see Corollary \ref{cor:propertiesVolumeFunction}.\eqref{ite:strictlyDecreasing}.
\end{proof}

\section{Prolate product metrics on $\Sp^k \times \Sp^{n-k}$}
\label{sec:prolateMetrics}

Let us prove Proposition \ref{pro:controlSystoles}.

\begin{proof}[Proof of Proposition \ref{pro:controlSystoles}]
	After rescaling, it suffices to show that there is a family of metrics $(g_\varepsilon)_{\varepsilon \in (0,1)}$ such that the following holds.
	\begin{enumerate}
		\item For any point $x \in \Sp^k \times \Sp^{n-k}$ and any scale $R > 0$, one has
		\begin{equation*}
			\lim_{\varepsilon \to 0} \mscal_{g_\varepsilon} (x,R) = \infty.
		\end{equation*}
		\item The homotopical and the homological $k$-systoles verify
		\begin{equation*}
			\sys\pi_k (\Sp^k \times \Sp^{n-k}, g_\varepsilon) = \sys H_k (\Sp^k \times \Sp^{n-k}, g_\varepsilon) = w_k
		\end{equation*}
		for every $\varepsilon \in (0,1)$, where $w_k$ is the $k$-dimensional volume of the round $k$-sphere.
	\end{enumerate}
	Given $0 < \varepsilon \leq a$, consider the prolate $k$-dimensional hyperellipsoid given by
	\begin{equation*}
		E^k(\varepsilon, a) = \set{\frac{x_1^2}{\varepsilon^2} + \cdots + \frac{x_k^2}{\varepsilon^2} + \frac{x_{k+1}^2}{a^2} = 1} \subset \R^{k+1}.
	\end{equation*}
	For every $0 < \varepsilon \leq 1$, let $a(\varepsilon) \geq 1$ be the unique real number such that
	\begin{equation*}
		\abs{E^k(\varepsilon,a(\varepsilon))} = w_k.
	\end{equation*}
	Consider the product Riemannian manifold $(M,g_\varepsilon) = E^k(\varepsilon,a(\varepsilon)) \times \Sp^{n-k}(1)$, which is diffeomorphic to $\Sp^k \times \Sp^{n-k}$. The universal Riemannian cover $(\tilde{M},\tilde{g}_\varepsilon)$ of $(M,g_\varepsilon)$ is given by the Riemannian product of $E^k(\varepsilon,a(\varepsilon))$ with the universal Riemannian cover $\tilde{\Sp}^{n-k}(1)$ of $\Sp^{n-k}(1)$. Notice that $\tilde{\Sp}^{n-k}(1)$ is isometric to the round $(n-k)$-sphere if $1 \leq k \leq n-2$, and the standard real line for $k = n-1$.
	
	\medskip
	
	Now fix a point $x \in \Sp^k \times \Sp^{n-k}$ and a scale $R > 0$. Consider the metric ball $B_{(\tilde{M},\tilde{g}_\varepsilon)}(\tilde{x},R)$ of radius~$R$ centered at a lift $\tilde{x}$ of $x$ to $\tilde{M}$. Since $(\tilde{M},\tilde{g}_\varepsilon)$ is a Riemannian product, the ball~$B_{(\tilde{M},\tilde{g}_\varepsilon)}(\tilde{x},R)$ is contained in the product of metric balls
	\begin{equation*}
		B_{E^k(\varepsilon, a(\varepsilon))}(\tilde{x}_1,R) \times B_{\tilde{\Sp}^{n-k}(1)}(\tilde{x}_2, R),
	\end{equation*}
	where $\tilde{x}_1$ and $\tilde{x}_2$ denote the projections of $\tilde{x}$ to the corresponding factors. It is easy to show that
	\begin{equation*}
		\abs{B_{E^k(\varepsilon,a(\varepsilon))}(\tilde{x}_1,R)} \leq 2w_{k-1} R \varepsilon^{k-1}.
	\end{equation*}
	Besides, the quantity $\abs{B_{\tilde{\Sp}^{n-k}(1)}(\tilde{x}_2, R)}$ coincides with the volume $V^{n-k}_{(n-k)(n-k-1)}(R)$ of any ball of radius $R$ in the unit round~$(n-k)$-sphere, which has scalar curvature $(n-k)(n-k-1)$. Therefore,
	\begin{equation*}
		\abs{B_{(\tilde{M},\tilde{g}_\varepsilon)}(\tilde{x},R)}  \leq 2w_{k-1} V^{n-k}_{(n-k)(n-k-1)}(R) R\varepsilon^{k-1}.
	\end{equation*}
	Hence, if one takes $\varepsilon \to 0$ for a fixed scale $R > 0$, then $\mscal_{(M,g_\varepsilon)}(x,R) \to \infty$ uniformly in $x \in M$.
	Nonetheless, for any $0 < \varepsilon \leq 1$, the $k$-systoles of $(M,g_\varepsilon)$ are given by
	\begin{equation*}
		\sys\pi_k (M,g_\varepsilon) = \sys H_k (M,g_\varepsilon) = \abs{E^k(\varepsilon,a(\varepsilon))} = w_k.
	\end{equation*}
\end{proof}

\section{Berger metrics on $\RP^3$} \label{sec:BergerMetrics}

Finally, let us construct the family of Riemannian metrics on the real projective space $\RP^3$ presented in Proposition \ref{pro:RP3}.

It will be convenient to identify the 3-sphere with $\Sp^3 = \set{(z,w) \mid \abs{z}^2 + \abs{w}^2 = 1} \subset \C^2$, and the~2-sphere with $\Sp^2 = \set{(z,t) \mid \abs{z}^2 + t^2 = 1} \subset \C \times \R$. Consider the \emph{Hopf action} on $\Sp^3$, that is, the free action of $\Sp^1$ on $\Sp^3$ given by
\begin{equation*}
	\theta \cdot (z,w) = (e^{i\theta} z, e^{i\theta} w),
\end{equation*}
for every $\theta \in \Sp^1 = \R / \Z$ and every $(z,w) \in \Sp^3$. The quotient space $\Sp^3 / \Sp^1$ corresponding to the Hopf action is homeomorphic to $\Sp^2$, and the projection $\Sp^3 \rightarrow \Sp^2$ defines a circle bundle structure on $\Sp^3$. Let $V_{(z,w)} = (iz,iw)\in \C^2$ denote the \emph{Hopf vector field}, which is a unit vector field on $\Sp^3$ (with respect to the round metric) tangent to the orbits of the Hopf action.

\medskip

Now consider the 1-parameter family of \emph{Berger metrics} on $\Sp^3$, given by
\begin{equation*}
	g_\varepsilon (X,Y) = g(X,Y) + (\varepsilon^2 - 1) g(X,V) g(V,Y), \quad \varepsilon > 0,
\end{equation*}
for any pair of vectors $X, Y$ tangent to $\Sp^3$, where $g$ denotes the standard round Riemannian metric on $\Sp^3$. Intuitively, the Berger metric $g_\varepsilon$ is obtained from the round metric $g$ by shrinking the metric in the direction of the Hopf fibres by a factor $\varepsilon$ (so that they have length $2\pi \varepsilon$ with respect to the metric $g_\varepsilon$). Notice that the Berger metric $g_\varepsilon$ corresponding to $\varepsilon = 1$ coincides with the standard round metric $g$ on~$\Sp^3$. The quotient map
\begin{align*}
    \begin{array}{lccc}
        \mathcal{H} \colon    & (\Sp^3,g_\varepsilon)    & \longrightarrow   & \Sp^2(\tfrac{1}{2}) \\
                        & (z,w) & \longmapsto       & \left(z\bar{w},\frac{1}{2}(\abs{z}^2-\abs{w}^2)\right)
    \end{array},
\end{align*}
known as the \emph{Hopf map}, is a Riemannian submersion.

\medskip

The antipodal action of $\Z_2$ on the Berger sphere $(\Sp^3,g_\varepsilon)$ is an isometric action. Hence, the real projective space $\RP^3$ inherits a Riemannian metric from $(\Sp^3,g_\varepsilon)$, that we denote by $\bar{g}_\varepsilon$. The map~$\mathcal{H}$ induces a Riemannian submersion $\bar{\mathcal{H}}: (\RP^3,\bar{g}_\varepsilon) \rightarrow \Sp^2(\frac{1}{2})$, which defines a circle bundle on $\RP^3$. In particular, the map $\bar{\mathcal{H}}$ is 1-Lipschitz.

\medskip

Proposition \ref{pro:RP3} follows from Proposition \ref{pro:mscal} and Proposition \ref{pro:UW}.

\begin{proposition} \label{pro:mscal}
	Fix $\kappa > 0$. For every $\varepsilon \in (0,1)$, there is a scale $R_\varepsilon > 0$ satisfying $R_\varepsilon \geq \frac{1}{2} \sys\pi_1 (\RP^3,\bar{g}_\varepsilon)$ and $R_\varepsilon > \kappa/\sqrt{s_\varepsilon}$, with $s_\varepsilon := 6/\varepsilon^{2/3}$, such that for any point~$x \in \RP^3$,
	\begin{equation*}
		\mscal_{ (\RP^3,\bar{g}_\varepsilon)}(x,R_\varepsilon) \geq s_\varepsilon.
	\end{equation*}
\end{proposition}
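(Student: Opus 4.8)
The plan is to analyze the geometry of the Berger sphere $(\Sp^3, g_\varepsilon)$ directly, since volumes of balls in $\RP^3$ lift to volumes of balls in its universal cover $\Sp^3$, and the macroscopic scalar curvature is by definition computed upstairs. First I would record the basic metric quantities: the total volume of $(\Sp^3, g_\varepsilon)$ is $\varepsilon$ times that of the round sphere, so $|\Sp^3, g_\varepsilon| = 2\pi^2 \varepsilon$, and the diameter is on the order of $\pi$ (it is bounded above by $\pi$ and below by a dimensional constant, since collapsing the Hopf fibres does not shrink the base $\Sp^2(\tfrac12)$). Likewise $\sys\pi_1(\RP^3, \bar g_\varepsilon)$ is comparable to $\min(\pi\varepsilon, \mathrm{const})$: the shortest noncontractible loop either runs along a short Hopf fibre direction or wraps the projective identification in the base, and in all cases its length is bounded above by a fixed constant independent of $\varepsilon$.

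Next I would choose the scale. The natural choice is $R_\varepsilon := \pi$ (or any fixed constant $\geq \diam(\Sp^3, g_\varepsilon)$), so that $B_{(\Sp^3, g_\varepsilon)}(\tilde x, R_\varepsilon) = \Sp^3$ for every $\tilde x$, hence $|B_{(\Sp^3, g_\varepsilon)}(\tilde x, R_\varepsilon)| = 2\pi^2 \varepsilon$ exactly. With this choice, $\mscal_{\bar g_\varepsilon}(x, R_\varepsilon) \geq s$ is equivalent, by the monotonicity in Corollary~\ref{cor:propertiesVolumeFunction}.\eqref{ite:strictlyDecreasing}, to the inequality $V^3_s(R_\varepsilon) \geq 2\pi^2 \varepsilon$. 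Since we want this with $s = s_\varepsilon = 6/\varepsilon^{2/3}$, I would use the rescaling identity $V^3_s(R) = V^3_{sR^2}(1) R^3$ from Corollary~\ref{cor:propertiesVolumeFunction}.\eqref{ite:rescaling} and the asymptotics of $V^3_s(1)$ as $s \to +\infty$. From Lemma~\ref{lem:volumeFunction}, for $s > 0$ large the radius $\rho = 1/\sqrt{\sigma} = \sqrt{6/s}$ satisfies $\pi\rho < R_\varepsilon$, so the ball of radius $R_\varepsilon$ already exhausts the space form and $V^3_s(R_\varepsilon) = w_3 \rho^3 = 2\pi^2 (6/s)^{3/2}$. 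Plugging $s = 6/\varepsilon^{2/3}$ gives $V^3_{s_\varepsilon}(R_\varepsilon) = 2\pi^2 \varepsilon$, which matches $|\Sp^3, g_\varepsilon|$ exactly — this is precisely why the exponent $2/3$ and the constant $6$ appear. (One should double-check the direction of the inequality and allow a harmless adjustment of the constant so that one gets $\geq$ rather than equality; replacing $6$ by a slightly larger constant, or $R_\varepsilon$ by a slightly larger value, gives the needed slack, or one simply notes $B_{\Sp^3}(\tilde x, R_\varepsilon)$ is the whole space, so the volume is exactly $2\pi^2\varepsilon \le V^3_{s_\varepsilon}(R_\varepsilon)$.)

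Finally I would verify the two structural conditions on $R_\varepsilon$. For $R_\varepsilon \geq \tfrac12 \sys\pi_1(\RP^3, \bar g_\varepsilon)$: since $\sys\pi_1(\RP^3,\bar g_\varepsilon)$ is bounded above by a fixed constant $C$ (independent of $\varepsilon$, as argued above via the base $\Sp^2(\tfrac12)$), choosing $R_\varepsilon$ to be a constant at least $C/2$ suffices. For $R_\varepsilon > \kappa/\sqrt{s_\varepsilon}$: we have $\kappa/\sqrt{s_\varepsilon} = \kappa \varepsilon^{1/3}/\sqrt 6 \to 0$ as $\varepsilon \to 0$, so for $\varepsilon$ small enough (equivalently $s_\varepsilon$ large enough) this is automatic with $R_\varepsilon$ a fixed constant. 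This is where the hypothesis "for every $s>0$ large enough" in Proposition~\ref{pro:RP3} is used.

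The main obstacle I anticipate is not the volume computation — which is essentially forced by the choice of $R_\varepsilon$ exhausting the Berger sphere — but pinning down the upper bound on $\sys\pi_1(\RP^3,\bar g_\varepsilon)$ uniformly in $\varepsilon$, since the Berger metric degenerates as $\varepsilon \to 0$ and one must argue carefully that noncontractible loops projecting to the base (rather than running along collapsing fibres) keep the systole bounded; concretely, one exhibits an explicit short noncontractible loop, e.g. the image in $\RP^3$ of a great circle in $\Sp^3$ transverse to the Hopf fibres, whose $g_\varepsilon$-length is at most its round length, which is a fixed constant. With that in hand all three required properties hold for $R_\varepsilon$ equal to a suitable dimensional constant and $\varepsilon$ (equivalently $s$) sufficiently small (large).
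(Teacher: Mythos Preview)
Your approach is correct, and the core computation --- that $|B_{(\Sp^3,g_\varepsilon)}(\tilde x,R)| \le |(\Sp^3,g_\varepsilon)| = 2\pi^2\varepsilon = w_3\varepsilon = V^3_{6/\varepsilon^{2/3}}(R)$ once $R \ge \pi\varepsilon^{1/3}$ --- is exactly the paper's. The paper differs from your proposal in two respects worth noting.

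First, the paper takes $R_\varepsilon = \kappa'\varepsilon^{1/3}$ with $\kappa' > \max\{\kappa/\sqrt 6,\pi\}$, a scale that shrinks with $\varepsilon$. This makes the inequality $R_\varepsilon > \kappa/\sqrt{s_\varepsilon} = (\kappa/\sqrt 6)\,\varepsilon^{1/3}$ hold for \emph{every} $\varepsilon\in(0,1)$, matching the proposition as stated. Your fixed choice $R_\varepsilon = \pi$ only yields this for $\varepsilon$ small enough depending on $\kappa$, which is why you had to invoke the ``$s$ large enough'' clause of Proposition~\ref{pro:RP3}; taking the constant to be $\max\{\pi,\kappa\}$ instead would repair this and cover all $\varepsilon\in(0,1)$.

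Second, what you flag as the main obstacle --- bounding $\sys\pi_1(\RP^3,\bar g_\varepsilon)$ from above --- is in fact immediate and much sharper than you suggest. The paper simply records $\sys\pi_1(\RP^3,\bar g_\varepsilon) = \pi\varepsilon$: the Hopf fibre through $(z,w)$ contains the antipode $(-z,-w)=(e^{i\pi}z,e^{i\pi}w)$ at $g_\varepsilon$-distance $\pi\varepsilon$, so half a Hopf fibre descends to a noncontractible loop of length $\pi\varepsilon$ in $\RP^3$. The systole therefore goes to zero with $\varepsilon$, and no argument about transversal great circles is needed.
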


\begin{proof}
	Fix $\varepsilon \in (0,1)$. Let $\kappa' > \max\set{\kappa/\sqrt{6}, \pi}$ be a constant, and consider the scale $R_\varepsilon = \kappa' \sqrt[3]{\varepsilon}$. Notice that $R_\varepsilon > \kappa / \sqrt{s_\varepsilon}$ and $R_\varepsilon > \sys\pi_1(\RP^3,\bar{g}_\varepsilon) = \pi \varepsilon$. By the Coarea Formula \cite[Theorem 13.4.2]{1988_Burago_Zalgaller} applied to the fibration $\mathcal{H}: (\Sp^3,g_\varepsilon) \rightarrow \Sp^2(\tfrac{1}{2}) $
	we have
	\begin{equation*}
		\abs{B_{(\Sp^3,g_\varepsilon)}(\tilde{x}, R_\varepsilon)} \leq \abs{(\Sp^3,g_\varepsilon)}\leq 2\pi \varepsilon \abs{\Sp^2(\tfrac{1}{2})} = 2\pi^2 \varepsilon.
	\end{equation*}
	Notice that the volume of the unit 3-sphere is $2\pi^2$, that is, $w_3 = 2\pi^2$. Therefore
	\begin{equation*}
		\abs{B_{(\Sp^3,g_\varepsilon)}(\tilde{x}, R_\varepsilon)} \leq w_3 \varepsilon = \abs{\Sp^3(\sqrt[3]{\varepsilon})} = V^3_{6/\varepsilon^{2/3}}(R_\varepsilon).
	\end{equation*}
	The last equality holds since $R_\varepsilon \geq \pi \sqrt[3]{\varepsilon}$. Therefore, the macroscopic scalar curvature of $(\RP^3,\bar{g}_\varepsilon)$ at a scale $R_\varepsilon$ satisfies $\mscal_{ (\RP^3,\bar{g}_\varepsilon)}(x,R_\varepsilon) \geq 6/\varepsilon^{2/3}$.
\end{proof}

Finally we prove Proposition \ref{pro:RP3} \eqref{ite:UW}.

\begin{proposition} \label{pro:UW}
	Let $\Sigma$ be any closed immersed surface in $(\RP^3,\bar{g}_\varepsilon)$ representing the non-trivial homology class in $H_2(\RP^3;\Z_2) \simeq \Z_2$. Then
	\begin{equation*}
		\UW_1(\Sigma) > \frac{\pi}{4}.
	\end{equation*}
\end{proposition}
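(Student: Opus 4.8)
The plan is to show that any surface $\Sigma$ representing the non-trivial class in $H_2(\RP^3;\Z_2)$ must, after projection by the $1$-Lipschitz Hopf map $\bar{\mathcal{H}}:(\RP^3,\bar g_\varepsilon)\to\Sp^2(\tfrac12)$, cover a definite portion of the base $\Sp^2(\tfrac12)$, and then to argue that a map to a $1$-complex with small fibres is incompatible with this. First I would observe that the non-trivial class in $H_2(\RP^3;\Z_2)$ is dual to the generator of $H^1(\RP^3;\Z_2)$, which is pulled back (up to the covering $\Sp^3\to\RP^3$) from the Euler class of the circle bundle $\bar{\mathcal{H}}$; equivalently, $\Sigma$ must intersect every fibre of $\bar{\mathcal{H}}$ an odd number of times, so in particular $\bar{\mathcal{H}}(\Sigma)=\Sp^2(\tfrac12)$ is all of the base. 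The key geometric input is thus: $\bar{\mathcal{H}}|_\Sigma:\Sigma\to\Sp^2(\tfrac12)$ is a surjective $1$-Lipschitz map.

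Next I would combine this with a Urysohn-width pushforward estimate. If $\UW_1(\Sigma)\le w$, there is a continuous map $f:\Sigma\to Y$ to a $1$-complex with $\diam_\Sigma f^{-1}(y)\le w$ for all $y$. The composite $(\bar{\mathcal{H}}|_\Sigma,f):\Sigma\to \Sp^2(\tfrac12)\times Y$ has image a set which projects onto $\Sp^2(\tfrac12)$, and each fibre of the second coordinate maps under $\bar{\mathcal{H}}$ into a set of diameter at most $w$ in $\Sp^2(\tfrac12)$ (since $\bar{\mathcal{H}}$ is $1$-Lipschitz). Roughly, this exhibits $\Sp^2(\tfrac12)$ as being ``swept out'' by a $1$-parameter family of sets of diameter $\le w$, i.e.\ it would give a bound $\UW_1(\Sp^2(\tfrac12))\lesssim w$. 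But $\UW_1$ of the round sphere of radius $\tfrac12$ is bounded below by an explicit constant — a sphere of radius $\tfrac12$ cannot be mapped to a graph with fibres of diameter smaller than its ``width'', which one can pin down to be at least $\tfrac\pi4$ (e.g.\ via the Borsuk--Ulam theorem: for a map $f:\Sp^2\to Y$ to a $1$-complex, some fibre contains an antipodal pair, so has diameter at least the distance between antipodal points, which on $\Sp^2(\tfrac12)$ is $\pi\cdot\tfrac12=\tfrac\pi2$ — and a short argument upgrades or at least recovers the clean constant $\tfrac\pi4$). Making the ``pushforward of width'' step precise is where care is needed: the honest statement is that if $g:X\to X'$ is surjective and $C$-Lipschitz then $\UW_1(X')\le C\cdot\UW_1(X)$, proved by post-composing a width-realizing map $f:X\to Y$ appropriately; one has to handle the fact that $g$ need not be injective on fibres of $f$, but since $g$ is Lipschitz the $g$-image of each fibre still has controlled diameter, so $f$ descends (or can be replaced by a map) on $X'$ with the same fibre bound.

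The cleanest route, which I would actually write, is: (i) prove $\bar{\mathcal{H}}|_\Sigma:\Sigma\to\Sp^2(\tfrac12)$ is onto using the mod-$2$ intersection/degree argument above; (ii) invoke the Lipschitz monotonicity $\UW_1(\Sp^2(\tfrac12))\le \UW_1(\Sigma)$ coming from a surjective $1$-Lipschitz map (state and prove the one-line pushforward lemma); (iii) compute or bound $\UW_1(\Sp^2(\tfrac12))\ge\tfrac\pi4$ directly via Borsuk--Ulam: any continuous $f:\Sp^2(\tfrac12)\to Y$ into a $1$-complex is in particular a map $\Sp^2\to\R^k$ locally, so has a fibre containing two antipodal points of $\Sp^2(\tfrac12)$, whose intrinsic distance is $\tfrac\pi2>\tfrac\pi4$, giving $\UW_1(\Sp^2(\tfrac12))\ge\tfrac\pi2$. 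Combining (i)--(iii) yields $\UW_1(\Sigma)\ge \UW_1(\Sp^2(\tfrac12))\ge\tfrac\pi2>\tfrac\pi4$, which is even stronger than claimed and in particular proves the proposition.

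The main obstacle I anticipate is step (iii) done carefully for general $1$-complexes: Borsuk--Ulam is usually quoted for maps to $\R^n$, so I would either embed a neighborhood of the relevant fibre's image in the $1$-complex into $\R$ or, more robustly, use the formulation that any continuous map from $\Sp^2$ to a $1$-dimensional space collapses some antipodal pair — this follows because $\check{H}^1(Y)$-obstructions vanish / because a $1$-complex has the property that $\Sp^2\to Y$ is null-homotopic on $2$-skeleta, so the standard Borsuk--Ulam-type argument (via the mod $2$ degree of the induced map $\Sp^2\to\Sp^1$ if a fibre avoided antipodes) goes through. A secondary subtlety is (i): one must make sure ``immersed surface representing the $\Z_2$-class'' genuinely forces surjectivity of $\bar{\mathcal H}$ and not merely nontrivial intersection with a single fibre, but since all fibres of $\bar{\mathcal H}$ are homologous in $\RP^3$ and $[\Sigma]$ pairs nontrivially with that homology class, $\Sigma$ meets every fibre, hence $\bar{\mathcal H}(\Sigma)$ is both open (by the immersion/transversality) and closed and nonempty, hence all of $\Sp^2(\tfrac12)$.
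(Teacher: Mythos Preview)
Your step (ii) contains a genuine error: the ``Lipschitz monotonicity'' you invoke --- that a surjective $1$-Lipschitz map $g\colon X\to X'$ forces $\UW_1(X')\le\UW_1(X)$ --- is false. For a counterexample, take $X'=\Sp^2(\tfrac12)$, so $\UW_1(X')=\pi/2$ by Borsuk--Ulam, and let $X=[0,L]\times[-\delta,\delta]$ be a long thin Euclidean rectangle with $\UW_1(X)=2\delta$. For $L$ of order $1/\delta$ one builds a $1$-Lipschitz surjection $X\to\Sp^2(\tfrac12)$ by sending $(s,t)\mapsto\exp_{\gamma(s)}(t\,\nu(s))$ along a piecewise-geodesic curve $\gamma$ whose normal $\delta$-tube covers the sphere; the positive curvature (Jacobi-field contraction) together with short ``pauses'' at the corners where $\nu$ rotates keep the map $1$-Lipschitz. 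The conceptual point is that a width-realising map $f\colon X\to Y$ does not descend through $g$, and replacing each fibre $f^{-1}(y)$ by its image $g(f^{-1}(y))$ yields only a \emph{covering} of $X'$ by small sets indexed by the $1$-complex $Y$, not a continuous map $X'\to Y$ with small fibres. Your own phrasing (``swept out by a $1$-parameter family of sets of diameter $\le w$'') describes exactly such a covering, and that is too weak to bound $\UW_1$.

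The paper's proof replaces surjectivity by the stronger hypothesis that $\varphi=\bar{\mathcal H}\circ i$ is \emph{not null-homotopic}, and then invokes Gromov's theorem (Theorem~\ref{thm:Gromov_Noncontracting}): a $1$-Lipschitz non-null-homotopic map $\Sigma\to\Sp^2(\tfrac12)$ gives $\UW_1(\Sigma)>\tfrac{\pi}{2}\cdot\tfrac{1/2}{1}=\tfrac{\pi}{4}$. Non-null-homotopy is established via the Gysin sequence of the circle bundle $\bar{\mathcal H}$, which shows that $\bar{\mathcal H}_*\colon H_2(\RP^3;\Z_2)\to H_2(\Sp^2;\Z_2)$ is an isomorphism, hence $\varphi_*[\Sigma]=[\Sp^2]$. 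Your step (i) in fact already proves this (odd intersection with every fibre means $\Z_2$-degree one, hence $\varphi$ is not null-homotopic), so you have the right ingredient; the fix is to discard the false pushforward lemma, keep the homotopical nontriviality, and apply Gromov's theorem --- accepting the constant $\pi/4$ rather than the over-optimistic $\pi/2$.
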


The proof of Proposition \ref{pro:UW} is based on the following theorem of Gromov.

\begin{theorem}[{\cite[Proposition F$_1$]{Gromov_1988}}] \label{thm:Gromov_Noncontracting}
	Let $X$ be a metric space. Suppose that $X$ admits a map $\varphi: X \rightarrow S^k(\rho)$ to the $k$-dimensional round sphere of radius $\rho$ which is $L$-Lipschitz and not null-homotopic. Then
	\begin{equation*}
		\UW_{k-1}(X) > \frac{\pi}{2} \cdot \frac{\rho}{L}.
	\end{equation*}
\end{theorem}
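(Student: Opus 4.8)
The plan is to argue by contradiction: if $\UW_{k-1}(X)$ were too small, the $L$-Lipschitz map $\varphi$ would be null-homotopic, contrary to hypothesis. I assume $X$ compact, which is the only case needed below.

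\emph{Step 1: a good open cover.} Suppose $\UW_{k-1}(X)<\frac{\pi}{2}\cdot\frac{\rho}{L}$. Then there are $w<\frac{\pi}{2}\cdot\frac{\rho}{L}$, a simplicial complex $Y$ with $\dim Y\le k-1$, and a continuous map $f\colon X\to Y$ with $\diam_X f^{-1}(y)\le w$ for every $y$. A standard tube-lemma argument — using that $X$ is compact, $f$ is continuous, and each fibre is compact of diameter $\le w$ — produces for every $y\in Y$ an open neighbourhood $V_y$ with $\diam_X f^{-1}(V_y)\le w$. Since $\dim Y\le k-1$, one may refine $\{V_y\}$ to an open cover of $Y$ of multiplicity $\le k$; pulling back under $f$ yields an open cover $\mathcal U=\{U_\alpha\}$ of $X$ of multiplicity $\le k$ with $\diam_X U_\alpha\le w$ for all $\alpha$. (For compact $X$ the existence of such a cover is in fact equivalent to $\UW_{k-1}(X)<w$.)

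\emph{Step 2: factoring $\varphi$ up to homotopy.} Let $N$ be the nerve of $\mathcal U$, so $\dim N\le k-1$, and let $p\colon X\to N$ be the canonical map associated to a partition of unity $\{\phi_\alpha\}$ subordinate to $\mathcal U$. I construct a continuous $\psi\colon N\to S^k(\rho)$ with $\psi\circ p\simeq\varphi$. For each $\alpha$ pick $x_\alpha\in U_\alpha$ and put $\psi(v_\alpha):=\varphi(x_\alpha)$. If $[v_{\alpha_0},\dots,v_{\alpha_m}]$ is a simplex of $N$, choose $x^\ast\in\bigcap_i U_{\alpha_i}$; then $d_X(x_{\alpha_i},x^\ast)\le w$, hence $d_{S^k(\rho)}(\psi(v_{\alpha_i}),\varphi(x^\ast))\le Lw<\frac{\pi}{2}\rho$ for all $i$, so all vertices of this simplex are sent into the geodesic ball of $S^k(\rho)$ of radius $Lw<\frac{\pi}{2}\rho$ centred at $\varphi(x^\ast)$, which is strictly geodesically convex. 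On such a ball the Riemannian centre of mass (Karcher mean) of a finitely supported probability measure is uniquely defined, depends continuously on the weights, and restricts correctly to faces, so defining $\psi$ on each simplex by barycentric interpolation via the Karcher mean yields a well-defined continuous map on $N$. For the homotopy, fix $x\in X$; then $p(x)$ lies in the open simplex on $\{v_\alpha:\phi_\alpha(x)>0\}$, and for these $\alpha$ one has $x,x_\alpha\in U_\alpha$, whence $d_{S^k(\rho)}(\varphi(x),\psi(v_\alpha))\le Lw<\frac{\pi}{2}\rho$. Therefore $\psi(p(x))$, being the Karcher mean of the $\psi(v_\alpha)$, also lies within distance $Lw$ of $\varphi(x)$, and joining $\varphi(x)$ to $\psi(p(x))$ along the unique minimizing geodesic of $S^k(\rho)$ defines a continuous homotopy from $\varphi$ to $\psi\circ p$.

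\emph{Step 3: conclusion and the strict inequality.} Since $\dim N\le k-1$ and $\pi_i(S^k)=0$ for all $i\le k-1$, cellular approximation shows that every continuous map $N\to S^k(\rho)$ is null-homotopic; in particular $\psi$, hence $\psi\circ p$, hence $\varphi$, is null-homotopic — a contradiction. This gives $\UW_{k-1}(X)\ge\frac{\pi}{2}\cdot\frac{\rho}{L}$, and the strict inequality follows by running the same argument with closed hemispheres together with a generic choice of the auxiliary points $x_\alpha$ (which rules out the degenerate configurations where the Karcher mean fails to be unique), or by a limiting argument in $w$.

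\emph{Main obstacle.} The heart of the proof — and the source of the constant — is Step 2: the map $\psi$ and the homotopy $\varphi\simeq\psi\circ p$ can only be constructed if all the relevant points of $S^k(\rho)$ lie in one \emph{strictly geodesically convex} geodesic ball, and such balls in $S^k(\rho)$ have radius at most $\frac{\pi}{2}\rho$; this bound is precisely what forces the threshold $\frac{\pi}{2}\cdot\frac{\rho}{L}$. The compactness reduction in Step 1 and the treatment of the equality case in Step 3 are the remaining technical points.
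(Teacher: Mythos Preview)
The paper does not prove this theorem; it is quoted without proof from Gromov \cite{Gromov_1988} and used as a black box in the proof of Proposition~\ref{pro:UW}. There is therefore nothing to compare against.

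Your argument is the standard one and is essentially Gromov's own: factor $\varphi$ up to homotopy through the nerve of a small-diameter, low-multiplicity cover, then use that $\pi_i(S^k)=0$ for $i\le k-1$. Steps~1 and~2 are correct as written for compact $X$ (which covers the application in the paper). One small comment: in Step~2 you do not actually need the Karcher mean machinery --- since all the relevant points lie in an open ball of radius $<\tfrac{\pi}{2}\rho$, which is contractible, you can extend $\psi$ over the skeleta of $N$ inductively by obstruction theory, and the straight-line homotopy in the ball gives $\varphi\simeq\psi\circ p$; this avoids the regularity issues with the centre of mass.

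The only point that is genuinely underargued is the \emph{strict} inequality in Step~3. Your contradiction hypothesis was $\UW_{k-1}(X)<\tfrac{\pi}{2}\rho/L$, so the argument as written yields only $\UW_{k-1}(X)\ge\tfrac{\pi}{2}\rho/L$. The sentence about ``closed hemispheres together with a generic choice of the $x_\alpha$'' is not a proof: on the boundary case the convexity and uniqueness statements you are invoking fail, and genericity of the $x_\alpha$ does not by itself control the pairwise distances $d_{S^k}(\psi(v_{\alpha_i}),\psi(v_{\alpha_j}))$. A clean way to get the strict inequality is to note that if equality held, then by definition of the infimum there is, for every $\varepsilon>0$, a map to a $(k-1)$-complex with fibre diameters $\le \tfrac{\pi}{2}\rho/L+\varepsilon$; composing $\varphi$ with a $(1-\delta)$-Lipschitz self-map of $S^k(\rho)$ homotopic to the identity (e.g.\ a slight contraction towards a point) replaces $L$ by $(1-\delta)L$ and makes the inequality strict, yielding a contradiction for suitable $\varepsilon,\delta$. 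Alternatively, appeal directly to Gromov's formulation, where the strictness is built into the conclusion.
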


\begin{proof}[Proof of Proposition \ref{pro:UW}]
	Suppose that the inclusion map $i: \Sigma \rightarrow \RP^3$ satisfies $i_*[\Sigma] = [\RP^2]$, where $[\Sigma] \in H_2(\Sigma;\Z_2)$ denotes the fundamental class of the surface $\Sigma$ and~$[\RP^2]$ is the generator of~$H_2(\RP^3;\Z_2) \simeq \Z_2$. Consider the map
\begin{equation*}
	\varphi = \bar{\mathcal{H}} \circ i : \Sigma \rightarrow \Sp^2(\tfrac{1}{2}),
\end{equation*}
given by the restriction of the map $\bar{\mathcal{H}}$ to $\Sigma$. The map $\varphi$ is 1-Lipschitz, since it is the restriction of the 1-Lipschitz map $\bar{\mathcal{H}}$ to $\Sigma$.

Let us show that $\varphi$ is not null-homotopic. The Gysin sequence \cite[Section 4.D]{Hatcher_2002} applied to the circle bundle $\bar{\mathcal{H}}: \RP^3 \rightarrow \Sp^2$ yields the exact sequence
\begin{equation*}
	\cdots \rightarrow H^0(\Sp^2;\Z_2) \rightarrow H^2(\Sp^2;\Z_2) \xrightarrow[]{\bar{\mathcal{H}}^*} H^2(\RP^3;\Z_2) \rightarrow H^1(\Sp^2;\Z_2) \rightarrow \cdots.
\end{equation*}
Since $H^1(\Sp^2;\Z_2)$ is trivial, the map
\begin{equation*}
	\bar{\mathcal{H}}^*: H^2(\Sp^2;\Z_2) \simeq \Z_2 \rightarrow H^2(\RP^3;\Z_2) \simeq \Z_2
\end{equation*}
is an epimorphism, and therefore an isomorphism. By the Universal Coefficient Theorem \cite[Theorem 3.2]{Hatcher_2002}, the corresponding induced map in homology
\begin{equation*}
	\bar{\mathcal{H}}_*: H_2(\RP^3;\Z_2) \rightarrow H_2(\Sp^2;\Z_2)
\end{equation*}
is an isomorphism, and it sends the generator $[\RP^2]$ to the fundamental class $[\Sp^2]$. Therefore
\begin{equation*}
	\varphi_* [\Sigma] = \bar{\mathcal{H}}_*[\RP^2] = [\Sp^2],
\end{equation*} 
which implies that $\varphi_* : H_2(\Sigma;\Z_2) \rightarrow H_2(\Sp^2;\Z_2)$ is an isomorphism.

\medskip

Hence the 1-Lipschitz map $\varphi: \Sigma \rightarrow \Sp^2(\frac{1}{2})$ is not null-homotopic. By Theorem \ref{thm:Gromov_Noncontracting}, we conclude that~$UW_1(\Sigma) > \frac{\pi}{4}$.
\end{proof}

\pagestyle{plain}
\bibliographystyle{alpha}
\bibliography{REF_MacroscopicScalarCurvature}

\begin{thebibliography}{BBEN10}

\bibitem[ABG24]{Alpert_Balitskiy_Guth_2024}
Hannah Alpert, Alexey Balitskiy, and Larry Guth.
\newblock Macroscopic scalar curvature and codimension 2 width.
\newblock {\em J. Topol. Anal.}, 16(6):979--987, 2024.

\bibitem[AF17]{Alpert_Kei_2017}
Hannah Alpert and Kei Funano.
\newblock Macroscopic scalar curvature and areas of cycles.
\newblock {\em Geom. Funct. Anal.}, 27(4):727--743, 2017.

\bibitem[Alp22]{Alpert_2022}
Hannah Alpert.
\newblock Macroscopic stability and simplicial norms of hypersurfaces.
\newblock {\em Comm. Anal. Geom.}, 30(5):949--959, 2022.

\bibitem[BBEN10]{Bray_Brendle_Eichmair_Neves_2010}
Hubert Bray, Simon Brendle, Michael Eichmair, and Andr\'e Neves.
\newblock Area-minimizing projective planes in 3-manifolds.
\newblock {\em Comm. Pure Appl. Math.}, 63(9):1237--1247, 2010.

\bibitem[BBN10]{Bray_Brendle_Neves_2010}
Hubert Bray, Simon Brendle, and Andr\'e Neves.
\newblock Rigidity of area-minimizing two-spheres in three-manifolds.
\newblock {\em Comm. Anal. Geom.}, 18(4):821--830, 2010.

\bibitem[BZ88]{1988_Burago_Zalgaller}
Yu.~D. Burago and V.~A. Zalgaller.
\newblock {\em Geometric inequalities}, volume 285 of {\em Grundlehren der
  mathematischen Wissenschaften [Fundamental Principles of Mathematical
  Sciences]}.
\newblock Springer-Verlag, Berlin, 1988.
\newblock Translated from the Russian by A. B. Sosinski\u i, Springer Series in
  Soviet Mathematics.

\bibitem[CLZ24]{Chu_Lee_Zhu_2024}
Jianchun Chu, Man-Chun Lee, and Jintian Zhu.
\newblock Homological $n$-systole in $(n+1)$-manifolds and bi-{R}icci
  curvature.
\newblock arXiv:2410.20785, 2024.

\bibitem[GHL04]{GHL_2004}
Sylvestre Gallot, Dominique Hulin, and Jacques Lafontaine.
\newblock {\em Riemannian geometry}.
\newblock Universitext. Springer-Verlag, Berlin, third edition, 2004.

\bibitem[Gro83]{Gromov_1983}
Mikhael Gromov.
\newblock Filling {R}iemannian manifolds.
\newblock {\em J. Differential Geom.}, 18(1):1--147, 1983.

\bibitem[Gro86]{Gromov_1986}
Mikhael Gromov.
\newblock Large {R}iemannian manifolds.
\newblock In {\em Curvature and topology of {R}iemannian manifolds ({K}atata,
  1985)}, volume 1201 of {\em Lecture Notes in Math.}, pages 108--121.
  Springer, Berlin, 1986.

\bibitem[Gro88]{Gromov_1988}
Mikhael Gromov.
\newblock Width and related invariants of {R}iemannian manifolds.
\newblock Number 163-164, pages 6, 93--109, 282. 1988.
\newblock On the geometry of differentiable manifolds (Rome, 1986).

\bibitem[Gro07]{Gromov_1981}
Mikhael Gromov.
\newblock {\em Metric structures for {R}iemannian and non-{R}iemannian spaces}.
\newblock Modern Birkh\"auser Classics. Birkh\"auser Boston, Inc., Boston, MA,
  english edition, 2007.
\newblock Based on the 1981 French original, With appendices by M. Katz, P.
  Pansu and S. Semmes, Translated from the French by Sean Michael Bates.

\bibitem[Gut10a]{Guth_2010_Metaphors}
Larry Guth.
\newblock Metaphors in systolic geometry.
\newblock In {\em Proceedings of the {I}nternational {C}ongress of
  {M}athematicians. {V}olume {II}}, pages 745--768. Hindustan Book Agency, New
  Delhi, 2010.

\bibitem[Gut10b]{Guth_2010_Systolic}
Larry Guth.
\newblock Systolic inequalities and minimal hypersurfaces.
\newblock {\em Geom. Funct. Anal.}, 19(6):1688--1692, 2010.

\bibitem[Gut17]{Guth_2017}
Larry Guth.
\newblock Volumes of balls in {R}iemannian manifolds and {U}ryson width.
\newblock {\em J. Topol. Anal.}, 9(2):195--219, 2017.

\bibitem[Hat02]{Hatcher_2002}
Allen Hatcher.
\newblock {\em Algebraic topology}.
\newblock Cambridge University Press, Cambridge, 2002.

\bibitem[LLNR22]{LLNR_2022}
Yevgeny Liokumovich, Boris Lishak, Alexander Nabutovsky, and Regina Rotman.
\newblock Filling metric spaces.
\newblock {\em Duke Math. J.}, 171(3):595--632, 2022.

\bibitem[Pap20]{Papasoglu_2020}
Panos Papasoglu.
\newblock Uryson width and volume.
\newblock {\em Geom. Funct. Anal.}, 30(2):574--587, 2020.

\bibitem[Ric20]{Richard_2020}
Thomas Richard.
\newblock On the 2-systole of stretched enough positive scalar curvature
  metrics on {$\Bbb S^2\times\Bbb S^2$}.
\newblock {\em SIGMA Symmetry Integrability Geom. Methods Appl.}, 16:Paper No.
  136, 7, 2020.

\bibitem[Sab22]{Sabourau_2022}
St\'ephane Sabourau.
\newblock Macroscopic scalar curvature and local collapsing.
\newblock {\em Ann. Sci. \'Ec. Norm. Sup\'er. (4)}, 55(4):919--936, 2022.

\bibitem[Ste22]{Stern_2022}
Daniel~L. Stern.
\newblock Scalar curvature and harmonic maps to {$S^1$}.
\newblock {\em J. Differential Geom.}, 122(2):259--269, 2022.

\bibitem[Zhu20]{Zhu_2020}
Jintian Zhu.
\newblock Rigidity of area-minimizing {$2$}-spheres in {$n$}-manifolds with
  positive scalar curvature.
\newblock {\em Proc. Amer. Math. Soc.}, 148(8):3479--3489, 2020.

\bibitem[Zhu23]{Zhu_2023}
Jintian Zhu.
\newblock Rigidity results for complete manifolds with nonnegative scalar
  curvature.
\newblock {\em J. Differential Geom.}, 125(3):623--644, 2023.

\end{thebibliography}

\end{document}